\numberwithin{equation}{section}
\providecommand{\binom}[2]{{#1\choose#2}}
\newcommand{\Hom}{\operatorname{Hom}}
\newcommand{\Osh}{{\mathcal O}}                        %  Structure sheaf
\renewcommand{\H}{\mathrm{H}}                          %  Cohomology group
\renewcommand{\P}{\mathrm{P}}
\newcommand{\Pic}{\operatorname{Pic}} %Pic
\newcommand{\E}{\mathrm{E}}                        
\newcommand{\K}{\mathrm{K}}                            
\newcommand{\Br}{\operatorname{Br}}
\newcommand{\cchar}{\operatorname{char}}
\newcommand{\G}{\mathbb{G}}
\newcommand{\NS}{\operatorname{NS}} %Neron-Severi
\newcommand{\kk}{\mathbf{k}}
\newcommand{\GL}{\operatorname{GL}}
\newcommand{\PGL}{\operatorname{PGL}}
\newcommand{\spec}{\operatorname{Spec}}
\newcommand{\CC}{\mathbb{C}} % complex numbers
\newcommand{\PP}{\mathbb{P}} % projective space
\newcommand{\QQ}{\mathbb{Q}} % rational numbers
\newcommand{\ZZ}{\mathbb{Z}} % integers
\newtheorem{theorem}{Theorem}[section]
\newtheorem{lemma}[theorem]{Lemma}
\newtheorem{corollary}[theorem]{Corollary}
\newtheorem{proposition}[theorem]{Proposition}
\theoremstyle{definition}
\newtheorem{defn}[theorem]{Definition}
\newtheorem{remark}[theorem]{Remark}
\newtheorem{example}[theorem]{Example}
\begin{document}
\title[Cubic torsors and projective bundles over Abelian varieties]{On cubic torsors, biextensions and Severi-Brauer varieties over Abelian varieties}
%\title{}
%\author{}
\author{Nathan Grieve}
{\blue 
\address{
School of Mathematics and Statistics, 4302 Herzberg Laboratories, Carleton University, 1125 Colonel By Drive, Ottawa, ON, K1S 5B6, Canada
}
\address{D\'{e}partement de math\'{e}matiques, Universit\'{e} du Qu\'{e}bec \'a Montr\'{e}al, Local PK-5151, 201 Avenue du Pr\'{e}sident-Kennedy, Montr\'{e}al, QC, H2X 3Y7, Canada}

\email{nathan.m.grieve@gmail.com}%

}

\begin{abstract} 
We study homogeneous irreducible Severi-Brauer varieties over an Abelian variety $A$.  Such objects were classified by Brion \cite{Bri}.  Here we interpret that result within the context of cubic structures and biextensions for certain $\G_m$-torsors over finite subgroups of $A$.  Our results build on the theory of Breen, \cite{Breen:1983}, and Moret-Bailly \cite{Moret-Bailly}.
\end{abstract}
\thanks{\emph{Mathematics Subject Classification (2020):} 14K05, 14F22.}
\thanks{This article has been published in S\~{a}o Paulo Journal of Mathematical Sciences. The final published version is available online at:  https://doi.org/10.1007/s40863-021-00250-3.}

\maketitle

\section{Introduction}

Let $A$ be an Abelian variety, over an algebraically closed field $\kk$, and let 
$$\Br(A) := \H^2_{\mathrm{et}}(A, \G_m)$$ 
be its cohomological Brauer group.  Fix a positive integer $r$ with  $p:= \cchar(\kk) \nmid r$.  Our purpose here is to study $\Br(A)[r]$, the $r$-torsion subgroup of $\Br(A)$.   In doing so, we apply work of Brion, \cite{Bri}, which builds on previous work of Atiyah, \cite{Atiyah}, Mumford \cite{Mum:v1}, Berkovich \cite{Berkovich:1972}, Mukai, \cite{Muk78}, and others. 

As in \cite{Bri}, we say that a Severi-Brauer variety over $A$ is \emph{homogeneous} if it is invariant under pullback by all translations of elements of $A$.  A homogeneous Severi-Brauer variety is \emph{irreducible} if it contains no nontrivial proper homogeneous subbundle.
In Section \ref{proof:Brion:thm}, we give a complete proof of the following result which was observed by Brion.  It applies earlier work of Berkovich \cite{Berkovich:1972}.

\begin{theorem}[{\cite[page 2497]{Bri}}]\label{Brion:Br:thm}
Each class in $\Br(A)[r]$ may be represented by a homogeneous irreducible Severi-Brauer variety.
\end{theorem}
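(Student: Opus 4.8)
The plan is to relate homogeneous irreducible $\PP^{r-1}$-bundles to representations of theta groups and to match the resulting Brauer classes against an explicit, Kummer-theoretic description of $\Br(A)[r]$. First I would record the general fact that a rank $r$ Severi-Brauer variety over $A$ is a $\PGL_r$-torsor, and that the boundary map
$$\H^1_{et}(A,\PGL_r) \to \H^2_{et}(A,\G_m) = \Br(A)$$
coming from $1 \to \G_m \to \GL_r \to \PGL_r \to 1$ has image inside $\Br(A)[r]$. Hence every $\PP^{r-1}$-bundle has a Brauer class in $\Br(A)[r]$, and the assertion of the theorem is that this assignment is already surjective when restricted to the homogeneous irreducible bundles.

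Next I would make the target group explicit. The Kummer sequence $1 \to \mu_r \to \G_m \xrightarrow{r} \G_m \to 1$ yields
$$0 \to \Pic(A)/r\Pic(A) \to \H^2_{et}(A,\mu_r) \to \Br(A)[r] \to 0,$$
and, since $\kk$ is algebraically closed, $\Pic^0(A)$ is $r$-divisible, so the first term is $\NS(A)/r\NS(A)$. Fixing a primitive $r$-th root of unity to identify $\mu_r \cong \ZZ/r$, the cohomology ring of $A$ gives $\H^2_{et}(A,\mu_r) \cong \wedge^2 \H^1_{et}(A,\mu_r)$ with $\H^1_{et}(A,\mu_r)$ dual to $A[r]$. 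Thus $\Br(A)[r]$ is identified with the group of alternating $\mu_r$-valued forms on $A[r]$, modulo those induced by $\NS(A)$.

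Given $\alpha \in \Br(A)[r]$, I would represent it by such an alternating form and, after passing to a suitable finite subgroup $K \subseteq A$ of order $r^2$, arrange that the form restricts to a nondegenerate pairing $e$ on $K$. This pairing is the commutator of a theta group, i.e. a central extension $1 \to \G_m \to \mathcal{H} \to K \to 1$, which by the Stone-von Neumann theorem carries a unique irreducible representation $V$ of dimension $r$ on which $\G_m$ acts by scalars. Using the translation action of $K$ on $A$ and descent of the $\mathcal{H}$-equivariant data along the isogeny $A \to A/K$, this representation produces a homogeneous $\PP^{r-1}$-bundle $\PP(E) \to A$, whose irreducibility is exactly the nondegeneracy of $e$. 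A boundary-map computation should then show that the Brauer class of $\PP(E)$ is the class of $e$, namely $\alpha$.

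The main obstacle is this identification of Brauer classes together with the surjectivity it encodes. Concretely, one must verify that the obstruction to lifting the projective representation $K \to \PGL(V)$ to a linear one --- naturally a cup product in $\H^2_{et}(A,\mu_r)$ --- reproduces the prescribed alternating form, and, more delicately, that every class of $\Br(A)[r]$ admits a representative by a \emph{nondegenerate} form on some subgroup $K$, so that the degenerate data yielding reducible bundles can be circumvented. It is precisely this matching, rather than the formal theta-group construction, where Brion's classification of homogeneous irreducible bundles does the real work.
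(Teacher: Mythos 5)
You should first be aware of what the paper actually does with this statement: it does not prove it. Theorem \ref{Brion:Br:thm} is quoted from \cite[page 2497]{Bri}, explicitly attributed to Brion, and is used only as an input (together with Theorem \ref{irred:homog:theta:bundles:thm}) in the proof of Theorem \ref{Br:thm:2}. So your proposal must stand on its own, and judged that way it has a genuine gap. The parts that are sound are the ones the paper records elsewhere as known: the boundary map $\H^1_{et}(A,\PGL_r)\to\Br(A)$ lands in $\Br(A)[r]$; the Kummer sequence yields $0\to\Pic(A)/r\Pic(A)\to\Hom\bigl(\bigwedge^2 A[r],\mu_r\bigr)\to\Br(A)[r]\to 0$ (Theorem \ref{Ab:var:Brauer:presentation}); and a nondegenerate pair $(\K,e)$ produces, via the theta group and its Schr\"odinger representation, a homogeneous irreducible $\PP^{r-1}$-bundle (Proposition \ref{Schrodinger:Azumaya}).

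The gap is exactly the step you flag as ``the main obstacle'' and then defer: (a) that every class of $\Br(A)[r]$ is represented by an alternating form whose restriction to some subgroup $\K\subseteq A[r]$ of order $r^2$ is nondegenerate, and (b) that the Brauer class of the bundle built from $(\K,e)$ recovers the given class. These two claims \emph{are} the theorem, and deferring them to ``Brion's classification'' does not close the argument: the classification (Proposition \ref{Brion:classified:homog:irred}) only parametrizes the bundles by pairs $(\K,e)$; it says nothing about which elements of $\Br(A)[r]$ they realize. Moreover, your proposed mechanism --- naive restriction of the form on $A[r]$ to a subgroup --- is not the correct correspondence. The commutator form $e$ lives on $\K$, the kernel of the isogeny $A\to A/\K$ over which the quotient bundle naturally sits, and $\K$ maps to zero in the base; relating $e$ to a $\mu_r$-valued form on the $r$-torsion of the base goes through the dual isogeny and Weil-pairing duality, not through restriction. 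A concrete warning that the matching is delicate: taking $e$ to be the Weil pairing $e^L$ of a separable polarization with $\#\K(L)=r^2$ gives a nondegenerate pair whose associated bundle is the projectivization of a (semi-homogeneous, cf.\ \cite{Muk78}) vector bundle, hence has \emph{trivial} Brauer class; so the map $(\K,e)\mapsto c(\PP)$ collapses precisely an $\NS(A)/r\NS(A)$ worth of data, and pinning it down requires the cup-product/boundary computation that Brion performs on the cited page. Finally, a small but symptomatic slip: you write the output as $\PP(E)\to A$ for a vector bundle $E$, which would force the class to be trivial; since the projective representation of $\K$ does not linearize (only the theta group acts linearly), the quotient $(A\times\PP^{r-1})/\K$ is not a projectivized vector bundle in general.
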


Second, we study \emph{cubic structures} on $\G_m$-torsors over subgroups $\K \subseteq A[n]$, of the group of $n$-torsion points of $A$.  We require that $\# \K = n^2$ and $p \nmid n$.  Our result identifies a class of such cubic torsors which correspond to the homogeneous irreducible $\PP^{n-1}$-bundles over $A$.  We say that such cubic torsors are \emph{nondegenerate} and refer to Definition \ref{non-degenerate-couples} for precise details.  Using this terminology, our main novel conceptual result is

\begin{theorem}\label{Br:thm:2}
Each class in $\Br(A)[r]$ may be represented by a nondegenerate cubic torsor.
\end{theorem}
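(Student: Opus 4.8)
The plan is to route the statement through Brion's geometric representatives and then distil the relevant finite-group data. Fix a class $\alpha \in \Br(A)[r]$. By Theorem \ref{Brion:Br:thm} I may choose a homogeneous irreducible $\PP^{r-1}$-bundle $\pi\colon P \to A$ with $[P] = \alpha$. It then suffices to functorially attach to $P$ a subgroup $\K \subseteq A[r]$ with $\#\K = r^2$ together with a nondegenerate cubic torsor $\mathcal{L}$ over $\K$, and to verify that the Brauer class determined by $(\K,\mathcal{L})$ is again $\alpha$.

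First I would build the relevant central extension. Homogeneity gives $t_a^* P \cong P$ for all $a\in A$, so the isomorphisms of projective bundles lifting translations assemble into a group $\mathcal{G}(P)$ of pairs $(a,\tilde\varphi)$ with $\tilde\varphi\colon t_a^* P \xrightarrow{\ \sim\ } P$. Since $\Aut(\PP^{r-1}) = \PGL_r$, the fibre over each $a$ is a $\PGL_r$-torsor, and the theta-group formalism of Mumford --- in the form adapted to semi-homogeneous bundles by Mukai, \cite{Muk78} --- presents $\mathcal{G}(P)$ as a central extension
\[ 1 \longrightarrow \G_m \longrightarrow \mathcal{G}(P) \longrightarrow \K \longrightarrow 1 \]
with $\K \subseteq A[r]$ finite. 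Its commutator descends to an alternating pairing $e_P\colon \K\times\K \to \G_m$, and the irreducibility of $P$ corresponds to nondegeneracy of $e_P$; Stone--von Neumann uniqueness for the ensuing Heisenberg group then realises $P$ as the projectivisation of the unique weight-one irreducible representation, of dimension $r$, forcing $\#\K = r^2$.

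Next I would turn $\mathcal{G}(P)$ into a cubic torsor. Regarding $\mathcal{G}(P)$ as a $\G_m$-torsor $\mathcal{L}$ over the commutative group $\K$, the theorem of the cube on $A$ restricted to $\K$ supplies a rigidified trivialisation of $\Theta(\mathcal{L})$ over $\K^3$, that is, a cubical structure in the sense of Breen, \cite{Breen:1983}, and Moret-Bailly, \cite{Moret-Bailly}; the required symmetry and cocycle identities are inherited from the group law and from the cube relations on $A$. The pairing $e_P$ is the biextension commutator of this cubical structure, so nondegeneracy of $e_P$ is exactly the condition of Definition \ref{non-degenerate-couples}, making $(\K,\mathcal{L})$ a nondegenerate cubic torsor with $\#\K = r^2$. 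Finally I would compute the Brauer class of $(\K,\mathcal{L})$ by pushing the class of $\mathcal{L}$ through the connecting map out of $\H^1(\K,\PGL_r)$, equivalently through the biextension machinery, and identify it with $[P] = \alpha$.

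The main obstacle is this final compatibility, together with the verification that the constructed trivialisation truly obeys the Breen--Moret-Bailly axioms. One must match the cohomological Brauer class of the geometric bundle $P$ with the class manufactured from the purely group-theoretic extension over $\K$, and must confirm that the data extracted from $\mathcal{G}(P)$ is a genuine cubical structure and not merely the underlying biextension. I expect the key leverage to be the identification $\Aut(\PP^{r-1})=\PGL_r$ tracked through the boundary homomorphism, combined with Stone--von Neumann uniqueness, which pins the representation-theoretic and geometric descriptions to the same class and thereby closes the circle.
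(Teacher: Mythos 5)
Your outer strategy is the same as the paper's: use Theorem \ref{Brion:Br:thm} to represent the class $\alpha$ by a homogeneous irreducible $\PP^{r-1}$-bundle $P$, then convert $P$ into a nondegenerate cubic torsor over a finite $\K \subseteq A[r]$. The conversion, however, is where your argument breaks. The group $\mathcal{G}(P)$ of pairs $(a,\tilde\varphi)$ with $\tilde\varphi \colon \tau_a^* P \xrightarrow{\sim} P$ does not have the structure you assert: because $P$ is \emph{homogeneous}, every $a \in A$ admits such a lift, so $\mathcal{G}(P)$ surjects onto all of $A$, not onto a finite subgroup of $A[r]$; and the fibre over a given $a$ is a torsor under $\Aut_A(P)$, the automorphisms of $P$ as a bundle over $A$, not under $\PGL_r = \Aut(\PP^{r-1})$, which is the automorphism group of a single fibre. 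For irreducible $P$ the group $\Aut_A(P)$ is finite, so $\mathcal{G}(P)$ is an extension of $A$ by a finite group: no $\G_m$ and no finite base $\K$ appear anywhere in this construction. Nor can you fall back on Mukai's semi-homogeneous vector bundle formalism \cite{Muk78}: when $\alpha \neq 0$, the bundle $P$ is not the projectivization of any vector bundle, which is exactly the point of working with Severi--Brauer varieties. The paper obtains the nondegenerate theta group instead from Brion's structure theorem (Theorem \ref{irred:homog:structure}, Proposition \ref{Brion:classified:homog:irred}): $P \cong A \times^{\K} \PP^{r-1}$ with $\rho \colon \K \rightarrow \PGL_r$ irreducible and faithful, and the central extension is the pullback of $1 \rightarrow \G_m \rightarrow \GL_r \rightarrow \PGL_r \rightarrow 1$ along $\rho$; irreducibility corresponds to nondegeneracy of the commutator pairing, with Proposition \ref{Schrodinger:Azumaya} (the Schr\"odinger representation) supplying the converse direction.

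Two further gaps. First, the cubic structure on the $\G_m$-torsor $L$ underlying the theta group does not come from ``the theorem of the cube on $A$ restricted to $\K$'': $L$ lives over $\K$ and is not the restriction of a line bundle on $A$, so the cube theorem on $A$ says nothing about $\Theta(L)$. In the paper, the section $t$ and the trivialization $\sigma$ of the biextension $\Lambda(L)$ are manufactured from the group law of the central extension itself, via the equivalence of categories of Proposition \ref{theta:prop} together with Proposition \ref{cubic:symmetric:biextension:equivalence}; the construction is entirely internal to $\K$. Second, the compatibility you flag as the ``main obstacle'' --- that the class recovered from $(\K, L)$ is again $\alpha$ --- does not arise in the paper's proof, because Theorem \ref{irred:homog:theta:bundles:thm} is a classification: the nondegenerate cubic torsor attached to $P$ corresponds to $P$ itself under the bijection, and ``represented by'' in Theorem \ref{Br:thm:2} means represented via that correspondence. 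In your set-up, where $(\K,L)$ is produced by an independent (and, as explained above, untenable) construction, this compatibility would genuinely remain to be proved.
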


One step in our proof of Theorem \ref{Br:thm:2} involves a reformulation of the basic structure theorems, for homogeneous projective bundles over $A$, which were obtained by Brion in \cite{Bri}.  
More precisely, a inite type group scheme is \emph{anti-affine} if it admits no global non-constant regular functions.  In particular, such a group is smooth, connected and commutative.  For our purposes, an \emph{anti-affine extension} of an Abelian variety $A$, is a short exact sequence of commutative group schemes 
\begin{equation}\label{anti:affine:ext:intro:1}
1 \rightarrow H \rightarrow G \xrightarrow{\pi} A \rightarrow 1
\end{equation}
in which the group $G$ is anti-affine and $H$ is affine.

In \cite{Bri}, Brion characterized the homogeneous projective bundles over $A$, in terms of anti-affine extensions \eqref{anti:affine:ext:intro:1} together with a faithful projective representation of the commutative group scheme $H$.

\begin{theorem}[{\cite[Theorem 2.1]{Bri}}]\label{homog:characterize}  Let $\kk$ be an algebraically closed field.
Let $\P$ be a $\PP^{n-1}$-bundle over an Abelian variety $A$ and assume that $n$ is not divisible by the characteristic of $\kk$.  The following assertions hold true.
\begin{itemize}
\item[(i)]{
The $\PP^{n-1}$-bundle $\P$ is homogeneous if and only if there exists an anti-affine extension \eqref{anti:affine:ext:intro:1}
together with a faithful homomorphism
\begin{equation}\label{anti:affine:ext:rep:1}
\rho \colon H \rightarrow \operatorname{PGL}_n
\end{equation}
such that $\P$ is the associated bundle
\begin{equation}\label{anti:affine:bundle}
\P \simeq G \times^H \PP^{n-1} \rightarrow G / H = A.
\end{equation}
Here, in \eqref{anti:affine:bundle}, $H$ acts on $\PP^{n-1}$ via the representation \eqref{anti:affine:ext:rep:1}.  The anti-affine extension \eqref{anti:affine:ext:intro:1} is unique and the projective representation \eqref{anti:affine:ext:rep:1} is unique up to conjugacy in $\operatorname{PGL}_n$.  Further, the $\PGL_n$-torsor that corresponds to the bundle \eqref{anti:affine:bundle} is the $\PGL_n$-bundle
$$
G \times^H \PGL_n \rightarrow A \text{;}
$$
the corresponding Azumaya algebra is
$$
\mathcal{A} = \left(\pi_*(\Osh_G) \otimes M_n(\Osh_A) \right)^H \text{.}
$$
Finally, the $A$-automorphism group of such a homogeneous bundle \eqref{anti:affine:bundle}  is isomorphic to the centralizer of $H$ in the projective linear group $\operatorname{PGL}_n$. 
}
\item[(ii)]{If $\P$ is homogeneous, then its homogeneous projective subbundles are exactly the bundles 
$$
G \times^HS \rightarrow A
$$
where $S \subseteq \PP^{n-1}$ is an $H$-stable linear subspace.  Further, if $(\P_1,\P_2)$ is a pair of disjoint $\PP^{n_i-1}$-subbundles of $\P$, with $n_1+n_2 = n$, then both $\P_1$ and $\P_2$ are homogeneous subbundles of $\P$.
}
\end{itemize}
\end{theorem}

In the present article, we relate the conceptual framework of  \cite{Breen:1983} and \cite{Moret-Bailly}  to the classification that is given in \cite{Bri}.  In doing so, we obtain a description of $\operatorname{Br}(A)[r]$ that is in terms of cubic structures and biextensions.  
The fact that the Theorem of the Cube is related to the Brauer group is well-known \cite{Hoobler:1972}.  We refer to the works  \cite{Skorobogatov:Zarhin:2008} and \cite{Brion:2020}, for example, as a starting point to the study of Brauer groups of Abelian varieties over non-algebraically closed fields.  We also mention the work \cite{Poonen:Rains:2011} which provides additional motivation for further research in the direction that we pursue here.

Our main result is Theorem \ref{irred:homog:theta:bundles:thm}.  It interprets the homogeneous irreducible $\PP^{n-1}$-bundles over $A$, for $p \nmid n$, in terms of the works \cite{Breen:1983} and \cite{Moret-Bailly}.  We also use Theorem \ref{irred:homog:theta:bundles:thm}, in conjunction with Theorem \ref{Brion:Br:thm}, to prove Theorem \ref{Br:thm:2}.  Both of these results, Theorem \ref{irred:homog:theta:bundles:thm} and Theorem \ref{Br:thm:2}, together with Theorem \ref{Brion:Br:thm}, which is due to Brion, have not, at present, been adequately discussed in the literature.

\subsection*{Notations and other conventions}   We fix an algebraically closed base field $\kk$ and let $p := \cchar(\kk)$.  Unless explicitly stated otherwise, we fix positive integers $n$ and $r$ and assume that they are not divisible by $p$.  When no confusion is likely, we fail to distinguish between the concepts of $\G_m$-torsors, total spaces of line bundles and invertible sheaves.  We consider Abelian varieties $A$ over $\kk$.  The dual of $A$ is denoted by $A^t$.  If $x \in A$, then $\tau_x \colon A \rightarrow A$ denotes translation by $x$ in the group law.  If $m \in \ZZ$, then $A[m]$ denotes the kernel of the morphism $[m]_A \colon A \rightarrow A$, $x \mapsto m x$.  This is the set of $m$-torsion points of $A$.   By  a $\PP^{n-1}$-bundle over $A$, or equivalently a rank $n$ Severi-Brauer variety over $A$, we mean a proper flat morphism $\P \rightarrow A$ with fibers at all closed points, $x \in A$, projective space $\PP^{n-1}$.  Finally, if $H$ is a finite commutative $\kk$-group scheme, then $H^t$ is its Cartier dual.

\subsection*{Acknowledgements}  
Portions of this work were completed during my time as a postdoctoral fellow at Michigan State University.  It benefited from visits to ICERM (Providence), during June 2019, and BIRS (Banff), during November 2019.  I thank Igor Rapinchuk and colleagues for their interest and discussions on topics related to this work.   Further, I thank the Natural Sciences and Engineering Research Council of Canada for supporting my work through my grants DGECR-2021-00218 and RGPIN-2021-03821.  Finally, I thank anonymous referees for their careful reading, corrections, helpful comments and suggestions.  

\section{Preliminaries about anti-affine extensions}

In general, the structure of each such anti-affine extension \eqref{anti:affine:ext:intro:1} depends on the characteristic of the base field.  Indeed, the following theorem, from \cite{Brion:2009}, gives a description of the set of isomorphism classes of anti-affine groups over a given Abelian variety.  In what follows, by an \emph{anti-affine group} over an Abelian variety $A$, is meant an exact sequence of commutative group schemes 
$$
1 \rightarrow H \rightarrow G \rightarrow A \rightarrow 1 \text{,} 
$$
 as in \eqref{anti:affine:ext:intro:1}, with $G$ an anti-affine group and $H$ an affine group.

\begin{theorem}[{\cite[Theorem 2.7]{Brion:2009}}]\label{Brion:anti:affine}
Let $A$ be an Abelian variety over an algebraically closed field $\kk$ with $p := \operatorname{char}(\kk)$.  Then, the following assertions hold true.
\begin{enumerate}
\item[(i)]{If $p > 0$, then the set of isomorphism classes of anti-affine groups over $A$ is in bijective correspondence with the set of sublattices of $A^t(\kk)$. }
\item[(ii)]{
If $p = 0$, then the set of isomorphism classes of anti-affine groups over $A$ are in bijective correspondence with the set of pairs $(\Lambda,V)$, where $\Lambda$ is a sublattice of $A^t(\kk)$ and where $V$ is a subvector space of $\H^1(A,\Osh_A)$. 
}
\end{enumerate}
\end{theorem}

While the overall structure of such anti-affine extensions \eqref{anti:affine:ext:intro:1}  depends on $p$, what remains true independently of the characteristic, is that they are classified by specifying a finite subgroup scheme $\K \subseteq A$ together with an anti-affine group over the Abelian variety $(A^t / \K^t)^t$.  
For later use, we make explicit mention of this fact.  We include a proof for completeness.

\begin{proposition}\label{anti:affine:structure:prop}
The anti-affine extensions \eqref{anti:affine:ext:intro:1} are classified by specifying a finite subgroup scheme $\K \subseteq A$ together with an anti-affine group over the Abelian variety $\left(A^t / \K^t\right)^t$.  
Further, given such an anti-affine extension \eqref{anti:affine:ext:intro:1} if $H$ is finite, then the anti-affine group $G$ is an Abelian variety. 
\end{proposition}
\begin{proof}
First, suppose given a finite subgroup scheme $\K \subseteq A$ together with an anti-affine group $G$ over the Abelian variety $(A^t / \K^t)^t$.  Let $H$ be the kernel of the induced map
$$
G \rightarrow (A^t / \K^t)^t \rightarrow A^{tt} = A.
$$
Then we obtain an anti-affine extension of the form \eqref{anti:affine:ext:intro:1}.

For the converse, if $G$ is an anti-affine group over $A$ and fitting into an extension as in \eqref{anti:affine:ext:intro:1},  then the morphism $\pi$ factors through 
$$B := \operatorname{Alb}(G) = G / G_{\mathrm{aff}} \text{, }$$ 
the Albanese variety of $G$.  Let $\K$ be the Cartier dual of $\K^t$, the kernel of the induced isogney 
$$A^t \rightarrow B^t \text{.}$$  
Then $\K$ is the finite subgroup of $A$ that is determined by the anti-affine extension \eqref{anti:affine:ext:intro:1}.  By double duality, arguing as in \eqref{dual:eqn1}, \eqref{dual:eqn2}, \eqref{dual:eqn3}, $G$ is an anti-affine group over $(A^t / \K^t)^t$.

For the remaining assertion, if $H$ is finite, then $G_{\mathrm{aff}}$, the largest  smooth  connected affine normal subgroup of $G$, is trivial.  It follows that $G$ is isomorphic to its Albanese variety.  Hence, $G$ is an Abelian variety. 
\end{proof}

Together, Theorem \ref{homog:characterize} and Proposition \ref{anti:affine:structure:prop} allow for the following characterization of homogenous bundles over $A$.  

\begin{corollary}\label{anti:affine:cor}
Let $\P$ be a rank $n$ Severi-Brauer variety over an Abelian variety $A$ and assume that $n$ is not divisible by the characteristic of $\kk$.  Then $\P$ is homogeneous if and only if there exists a finite subgroup $\K \subseteq A$ and an anti-affine group $G$ over the Abelian variety $(A^t / \K^t)^t$ such that if $H$ is the kernel of the induced map
$
G \rightarrow A \text{,}
$
then 
$$
\P \simeq G \times^H \PP^{n-1} \rightarrow A
$$
for some faithful projective representation 
$
\rho \colon H \rightarrow \operatorname{PGL}_n.
$
\end{corollary}
\begin{proof}
By Proposition \ref{anti:affine:structure:prop},  the anti-affine extensions \eqref{anti:affine:ext:intro:1} are classified by finite subgroups $\K \subseteq A$ together with anti-affine groups $G$ over $(A^t/\K^t)^t$.  Thus, the conclusion that is desired by Corollary \ref{anti:affine:cor} follows upon combining Proposition \ref{anti:affine:structure:prop} and Theorem \ref{homog:characterize} (i).
\end{proof}

A homogeneous projective bundle is irreducible if it contains no nontrival proper homogeneous subbundle.
The classification of homogeneous irreducible projective bundles involves the weight $1$ representations of nondegenerate theta groups, \cite{MumI}, \cite{Moret-Bailly}.  For example, Theorem \ref{homog:characterize}, Proposition \ref{anti:affine:structure:prop} and Corollary \ref{anti:affine:cor}, together with the theory of nondegenerate theta groups, allow for the homogeneous irreducible projective bundles to be described as in Corollary \ref{homog:irred:bundles} below.  Again, we include a short proof for completeness.  We discuss the theory of non-degenerate theta groups in Sections \ref{weight:one:theta} and \ref{S-B-cubical-structures}.

\begin{corollary}[{\cite[Proposition 3.1]{Bri}}]\label{homog:irred:bundles}
The homogeneous irreducible  rank $n$ Severi-Brauer varieties over $A$, for $p \nmid n$, are classified by pairs $(H,e)$, where 
\begin{equation}\label{finite:subgroup}
H \subseteq A[n]
\end{equation}
is a subgroup of rank $n^2$ and where 
\begin{equation}\label{non-deg-alt-pairing}
e \colon H \times H \rightarrow \mathbb{G}_m
\end{equation}
is a nondegenerate alternating pairing.
\end{corollary}
\begin{proof}
By the theory of nondegenerate theta groups, \cite[page  293]{MumI}, see also Sections \ref{weight:one:theta} and \ref{S-B-cubical-structures}, the nondegenerate alternating map \eqref{non-deg-alt-pairing} specifies an irreducible weight one representation of a nondegenerate theta group over $H$.  It also induces an isomorphism 
$$H \simeq H^t\text{.}$$  
The desired anti-affine group over $A$ is the Abelian variety $(A/H^t)^t$.
\end{proof}

\section{Equivalent descriptions of Azumaya algebras and Severi-Brauer Varieties}\label{Azumaya:alg:Ab:Sch}

Let $X$ be a nonsingular projective variety over an algebraically closed field $\kk$ and let $p:=\operatorname{char}(\kk)$.  There are many equivalent descriptions of Azumaya algebras over $X$.  To begin with, by an \emph{Azumaya algebra} over $X$, we mean an $\Osh_X$-algebra $\mathcal{A}$ which is a coherent $\Osh_X$-module and which has the property that its stalk $\mathcal{A}_x$ at all closed points $x \in X$ is an Azumaya algebra over the local ring $\Osh_{X,x}$.  In particular, $\mathcal{A}_x$ is Azumaya over $\Osh_{X,x}$ for all points $x \in X$.   Especially, $\mathcal{A}$ is a locally free $\Osh_X$-module of finite square rank.  Equivalently
\begin{enumerate}
\item[(i)]{the algebra $\mathcal{A}$ is a locally free $\Osh_X$-module and 
$$\mathcal{A}|_x := \mathcal{A}_x \otimes \kappa(x)
$$ 
is a central simple $\kappa(x)$-algebra for each $x \in X$; 
}
\item[(ii)]{the algebra 
$\mathcal{A}$ is a locally free $\Osh_X$-module and the canonical homomorphism
$$ \mathcal{A} \otimes_{\Osh_X} \mathcal{A}^\circ \rightarrow \mathcal{E}nd_{\Osh_X}(\mathcal{A})$$
is an isomorphism; and
}
\item[(iii)]{
there exists a covering $\{U_i \rightarrow X \}_{i \in I}$, for the \'{e}tale topology on $X$, so that for each $i$, there exists $r_i$ with the property that
$$
\mathcal{A} \otimes_{\Osh_X} \Osh_{U_i} \simeq M_{r_i}(\Osh_{U_i}),
$$
}
\end{enumerate}
see for example \cite[Chapter IV]{Milne:Etale}.

The geometric study of Azumaya algebras is achieved via their bijective correspondence with $\PGL_n$-torsors and $\PP^{n-1}$-bundles over $X$.  For instance, under this bijective correspondence, each such rank $n^2$ Azumaya algebra $\mathcal{A}$ has the form
\begin{equation}\label{Azumaya:eqn}
\mathcal{A} \simeq (\pi_*(\Osh_Y) \otimes M_{n}(\Osh_X))^{\PGL_n}
\end{equation}
for
\begin{equation}
\pi \colon Y \rightarrow X
\end{equation}
the corresponding $\PGL_n$-torsor; the corresponding $\PP^{n-1}$-bundle is then
\begin{equation}\label{PP:bundle:structure:morphism}
\P \simeq Y \times^{\PGL_n} \PP^{n-1} \rightarrow X.
\end{equation}
In particular, the structure morphism \eqref{PP:bundle:structure:morphism} is a proper flat morphism with $\PP^{n-1}$-fibres over closed points of $X$.

\begin{example}[{\cite[page  2477]{Bri}}]\label{Severi:Brauer:Segre:Product}
Recall the \emph{product} of two $\PP^{n_i-1}$-bundles 
$$P_i \rightarrow X\text{,}$$ for $i = 1,2$.    Let 
$$Y_i \rightarrow X$$ be the $\PGL_{n_i}$-bundles corresponding to $\P_i$.  Then $$\P_1 \cdot \P_2 \rightarrow X$$
is the $\PP^{n_1 n_2 - 1}$-bundle that is obtained from the $\PGL_{n_1} \times \PGL_{n_2}$-torsor 
$$Y_1 \times_X Y_2 \rightarrow X$$ 
via the tensor product extension of structure groups
$$
\PGL_{n_1} \times \PGL_{n_2} \rightarrow \PGL_{n_1 n_2}.
$$
\end{example}

Next, we consider Example \ref{Severi:Brauer:Segre:Product} for the particular case of homogeneous projective bundles over a given Abelian variety.

\begin{example}[{\cite[Remark 2.2]{Bri}}]  If 
$$\P_i = G_i \times^{H_i} \PP^{n_i - 1}\text{,}$$ 
for $i = 1,2$, are homogeneous $\PP^{n_i-1}$-bundles over and Abelian variety $A$, $p \nmid n_i$, corresponding to anti-affine extensions
$$
1 \rightarrow H_i \rightarrow G_i \rightarrow A \rightarrow 1
$$
and faithful projective representations 
$$
\rho_i \colon H_i \rightarrow \PGL_{n_i} \text{, }
$$
then the product bundle $\P_1 \cdot \P_2$ is homogeneous.  It has the form
$$
\P_1 \cdot \P_2 = G\times^H \PP^{n_1 \cdot n_2 - 1}
$$
for 
$$G \subseteq G_1 \times_A G_2$$ 
the largest anti-affine subgroup, 
$$H := (H_1 \times H_2) \bigcap G$$ 
and projective representation 
$$\rho = (\rho_1 \otimes \rho_2)|_H \text{.}$$
\end{example}

To conclude this section, we recall the cohomological description of Severi-Brauer varieties.  Let $n$ be a positive integer that is not divisible by $p$.
Recall that the pointed cohomology set $\H^1_{\mathrm{et}}(X,\PGL_n)$ classifies isomorphism classes of rank $n^2$ Azumaya algebras over $X$.  Equivalently, $\H^1_{\mathrm{et}}(X,\PGL_n)$ classifies $\PGL_n$-torsors over $X$.  The class of such an Azumaya algebra in 
$$\Br(X) = \H^2_{\mathrm{et}}(X,\G_m)$$ 
is obtained via the boundary morphism
$$
\partial \colon \H^1_{\mathrm{et}}(X,\PGL_n) \rightarrow \Br(X)
$$
which is induced by the exact sequence
$$
1 \rightarrow \G_m \rightarrow \GL_n \rightarrow \PGL_n \rightarrow 1
$$
of \'{e}tale sheaves of groups.

\section{Cyclic Azumaya algebras and the Brauer group}

In this section, we recall the most basic properties of $\Br(X)[r]$, the \emph{$r$-torsion} part of the Brauer group of a nonsingular projective variety $X$.  Here $r$ is a positive integer which is not divisible by the characteristic of the algebraically closed base field $\kk$.  The main point is Theorem \ref{Ab:var:Brauer:presentation}, which, among other things, says that the group $\Br(X)[r]$ is generated by the \emph{Hilbert symbols} $\{\alpha,\beta \}_r$.  Equivalently, $\Br(X)[r]$ is generated by the classes of $\mathcal{A}(\alpha,\beta)$, the cyclic algebras determined by $r$-torsion line bundles $\alpha$ and $\beta$ on $X$.  What we describe here is based on \cite{Berkovich:1972}, \cite{Milne:Etale} and \cite{Kaji:1988}.  Corollary \ref{Ab:var:Brauer:presentation:rank} and Example \ref{E1:times:E2:examples} illustrate the more specific case of Abelian varieties.

By a \emph{rigidified $r$-torsion line bundle} on $X$, is meant an $r$-torsion line bundle $\alpha$ together with a fixed choice of isomorphism 
$$\Osh_X \xrightarrow{\sim} \alpha^{\otimes r} \text{.}$$ 
Let $\mu_r$ denote the multiplicative group of $r$th roots of unity and fix a primitive $r$th root of unity $\zeta \in \mu_r$.  The collection of isomorphism classes of rigidified $r$-torsion line bundles may be identified with the \'{e}tale cohomology group $\H^1_{\mathrm{et}}(X,\mu_r)$. 

Fix a pair of rigidified $r$-torsion line bundles $(\alpha,\beta)$ on $X$.  The cyclic algebra determined by $(\alpha,\beta)$ is then
\begin{equation}\label{cyclic:alg:defn}  
\mathcal{A}(\alpha,\beta) := \bigoplus_{0 \leq i,j \leq r-1} \alpha^{\otimes i} \otimes \beta^{\otimes j}.
\end{equation}
In \eqref{cyclic:alg:defn}, the algebra structure depends on $\zeta$ and is determined by
$$
\left( \alpha^{\otimes i} \otimes \beta^{\otimes j} \right) \otimes \left(\alpha^{\otimes k} \otimes \beta^{\otimes \ell} \right) \xrightarrow{\zeta^{jk}} \left( \alpha^{\otimes i} \otimes \alpha^{\otimes k} \right) \otimes \left( \beta^{\otimes j} \otimes \beta^{\otimes \ell} \right)  \rightarrow \alpha^{\otimes q} \otimes \beta^{\otimes s}\text{,}
$$
where 
$$
i + k \equiv q \mod r \text{ and } j + \ell \equiv s \mod r \text{,}
$$ 
for 
$0 \leq q, s \leq r- 1$.

The affine local structure of $\mathcal{A}(\alpha,\beta)$ is described by fixing affine open subsets $U \subseteq X$ which trivialize $\alpha$ and $\beta$.  In particular
$$
\alpha|_U \xrightarrow{\sim} \Osh_U e_{\alpha} \xrightarrow{\sim} \Osh_U = \Osh_X|_U
$$
and
$$
\beta|_U \xrightarrow{\sim} \Osh_U e_{\beta} \xrightarrow{\sim} \Osh_U = \Osh_X|_U. 
$$
Here $e_\alpha$ and $e_\beta$ are local generators for $\alpha|_U$ and $\beta|_U$ respectively.  Let 
$$\phi \colon \Osh_X \xrightarrow{\sim} \alpha^{\otimes r}$$ 
and 
$$
\psi \colon \Osh_X \xrightarrow{\sim} \beta^{\otimes r}
$$ 
denote the fixed rigidifications  of $\alpha^{\otimes r}$ and $\beta^{\otimes r}$.  There exists units $a,b \in \Gamma(U,\Osh_U)$ with the properties that
$$
a \cdot \phi(1)|_U = e_\alpha^{\otimes r} 
$$
and 
$$
b \cdot \psi(1)|_U = e_\beta^{\otimes r} \text{.}
$$
It follows that $\mathcal{A}(\alpha,\beta)|_U$ is isomorphic, as an $\Osh_U$-algebra, to the $\Osh_U$-algebra which is generated by the elements $\mathbf{e}_\alpha$ and $\mathbf{e}_\beta$ subject to the relations that
$$
\mathbf{e}^r_\alpha = a \text{, } \mathbf{e}^r_\beta = b \text{ and } 
\mathbf{e}_\alpha \cdot \mathbf{e}_\beta = \zeta \mathbf{e}_\beta \cdot \mathbf{e}_\alpha
\text{.}
$$
Note that $\mathcal{A}(\alpha,\beta)$ is an \emph{Azumaya algebra} of rank $r^2$ over $A$.  

As in \cite{Kaji:1988}, we consider a concept of \emph{Hilbert symbol} for a pair of $r$-torsion line bundles.

\begin{defn}[{\cite{Kaji:1988}}]\label{Hilbert:Symbol:Defn}  Let $\{\alpha,\beta \}_r$ denote the class of $\mathcal{A}(\alpha,\beta)$ in 
$$\Br(X)[r] = \H^2_{\mathrm{et}}(X, \G_m)[r].$$
We say that $\{\alpha,\beta \}_r$ is the \emph{Hilbert symbol} of $\alpha$ and $\beta$.
\end{defn}

The vanishing of such Hilbert symbols $\{\alpha, \beta \}_r$ detects when the cyclic algebra $\mathcal{A}(\alpha,\beta)$ has trivial class in the Brauer group. 

\begin{proposition}[\cite{Kaji:1988}]
Let $(\alpha,\beta)$ be a fixed pair of rigidified $r$-torsion line bundles on $X$.  The following two assertions are equivalent.
\begin{enumerate}
\item[(i)]{The cyclic algebra $\mathcal{A}(\alpha, \beta)$ has the form 
$$\mathcal{A}(\alpha,\beta) \simeq \mathcal{E}nd(\mathcal{E})\text{,}$$ 
for some vector bundle $\mathcal{E}$ on $X$.
}
\item[(ii)]{
The Hilbert symbol $\{\alpha,\beta\}_r$ determined by $(\alpha,\beta)$ is trivial 
$$\{\alpha,\beta\}_r = 0.$$
}
\end{enumerate}
\end{proposition}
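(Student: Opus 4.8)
The plan is to deduce the equivalence from the standard exact sequence of nonabelian étale cohomology attached to the central extension
$$
1 \to \G_m \to \GL_r \to \PGL_r \to 1,
$$
whose connecting map I will write as
$$
\delta \colon \H^1_{et}(A, \PGL_r) \to \H^2_{et}(A, \G_m) = \Br(A).
$$
First I would record that, since $\mathcal{A}(\alpha,\beta)$ is an Azumaya algebra of rank $r^2$, it is étale-locally isomorphic to the matrix algebra $\M_r$ and hence determines a twisted form of $\M_r$, i.e. a class $[\mathcal{A}(\alpha,\beta)] \in \H^1_{et}(A, \PGL_r)$. By construction, and in view of Definition \ref{Hilbert:Symbol:Defn}, the image of this class under $\delta$ is exactly the Hilbert symbol, $\delta([\mathcal{A}(\alpha,\beta)]) = \{\alpha,\beta\}_r$. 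Granting this, the whole proposition reduces to identifying the fibre of $\delta$ over the distinguished point with the classes of endomorphism algebras.

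To prove $(i) \Rightarrow (ii)$ I would argue as follows. If $\mathcal{A}(\alpha,\beta) \simeq \mathcal{E}nd(\mathcal{E})$ for a vector bundle $\mathcal{E}$ on $A$, then comparing ranks shows $\mathcal{E}$ has rank $r$, and its frame bundle is a $\GL_r$-torsor whose associated twisted form of $\M_r$ is $\mathcal{E}nd(\mathcal{E})$. Thus $[\mathcal{A}(\alpha,\beta)]$ lies in the image of the natural map $\H^1_{et}(A, \GL_r) \to \H^1_{et}(A, \PGL_r)$, and exactness of the sequence at $\H^1_{et}(A, \PGL_r)$ forces $\{\alpha,\beta\}_r = \delta([\mathcal{A}(\alpha,\beta)]) = 0$.

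For the converse $(ii) \Rightarrow (i)$ I would run the same exactness in reverse. Assuming $\{\alpha,\beta\}_r = 0$, we get $\delta([\mathcal{A}(\alpha,\beta)]) = 0$, so by exactness $[\mathcal{A}(\alpha,\beta)]$ lifts to a class in $\H^1_{et}(A, \GL_r)$. Since $\GL_r$ is smooth, faithfully flat descent identifies $\H^1_{et}(A, \GL_r)$ with the isomorphism classes of rank $r$ vector bundles on $A$; choosing $\mathcal{E}$ to represent this lift, its associated $\PGL_r$-torsor coincides with that of $\mathcal{A}(\alpha,\beta)$, and untwisting the matrix algebra along this common torsor yields $\mathcal{A}(\alpha,\beta) \simeq \mathcal{E}nd(\mathcal{E})$.

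I expect the main obstacle to be the precise bookkeeping behind the identity $\delta([\mathcal{A}(\alpha,\beta)]) = \{\alpha,\beta\}_r$ together with the exactness of the pointed nonabelian sequence at $\H^1_{et}(A, \PGL_r)$: it is this exactness---asserting that the $\delta$-fibre over the trivial class is exactly the image of $\H^1_{et}(A, \GL_r)$---that powers both implications, and not any feature peculiar to the cyclic construction \eqref{cyclic:alg:defn}. The remaining steps are matters of matching conventions, namely that endomorphism algebras are precisely the twisted forms of $\M_r$ whose $\PGL_r$-torsor lifts to $\GL_r$, and that such lifts are honest vector bundles. In this light the statement is simply the abelian-variety incarnation of the general principle that an Azumaya algebra is split in the Brauer group if and only if it is the endomorphism algebra of a vector bundle.
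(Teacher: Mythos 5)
Your proof is correct, but it follows a different route from the paper. The paper's entire proof is a deferral: it cites Kaji's local verification (\cite[Proposition 1.5]{Kaji:1988}), which establishes the equivalence by a computation tied to the explicit presentation of $\mathcal{A}(\alpha,\beta)$ by generators $\mathbf{e}_\alpha, \mathbf{e}_\beta$ on trivializing opens. You instead give the general cohomological argument: since the Hilbert symbol is, by Definition \ref{Hilbert:Symbol:Defn} together with the paper's convention that the Brauer class of an Azumaya algebra is $\partial$ of its class in $\H^1_{et}(A,\PGL_r)$, literally equal to $\delta([\mathcal{A}(\alpha,\beta)])$, the statement reduces to the exactness of the pointed nonabelian cohomology sequence at $\H^1_{et}(A,\PGL_r)$ for the central extension $1 \to \G_m \to \GL_r \to \PGL_r \to 1$, plus the identifications $\H^1_{et}(A,\GL_r) \simeq \{\text{rank } r \text{ vector bundles}\}$ and $\H^1_{et}(A,\PGL_r) \simeq \{\text{rank } r^2 \text{ Azumaya algebras}\}$. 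All of these are standard (they appear in \cite[Chapter IV]{Milne:Etale}, which the paper itself cites in Section \ref{Azumaya:alg:Ab:Sch}), and your bookkeeping in both directions---frame bundle for (i) $\Rightarrow$ (ii), lifting and untwisting along the common torsor for (ii) $\Rightarrow$ (i)---is sound. What your approach buys is generality and self-containedness: the equivalence holds for any rank $r^2$ Azumaya algebra on any scheme, with nothing special about the cyclic construction or about $A$ being an Abelian variety, exactly as you observe in your closing remark. What the paper's (i.e.\ Kaji's) approach buys is concreteness: the local check stays within the explicit cyclic-algebra presentation and avoids invoking the Giraud-style exactness of nonabelian cohomology as a black box.
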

\begin{proof}
This can be checked locally as noted in \cite[Proposition 1.5]{Kaji:1988}.
\end{proof}

By analogy with the Hilbert tame symbols that arise in the Milnor K-theory of fields, \cite[Chapter 7]{Gille:Szamuley:2006}, the Hilbert symbols, in the sense of Definition \ref{Hilbert:Symbol:Defn}, exhibit the following properties.
 
\begin{proposition}[\cite{Kaji:1988}]
Let $\alpha$, $\beta$ and $\gamma$ be a collection of rigidified $r$-torsion line bundles on $X$.  The following properties hold true
\begin{enumerate}
\item[(i)]{
$\{\alpha \otimes \beta, \gamma \}_r = \{\alpha,\gamma\}_r + \{\beta,\gamma \}_r$;
}
\item[(ii)]{
$\{\alpha,\beta \otimes \gamma \}_r = \{\alpha, \beta\}_r + \{\alpha,\gamma\}_r$; and 
}
\item[(iii)]{
$
\{\alpha,\beta\}_r + \{\beta,\alpha\}_r = 0.
$
}
\end{enumerate}
\end{proposition}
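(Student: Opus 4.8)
The plan is to identify the Hilbert symbol with a cup product in étale cohomology, after which the three properties become formal consequences of standard properties of $\cup$. First I would invoke the Kummer sequence
$$1 \to \mu_r \to \G_m \xrightarrow{\ \cdot r\ } \G_m \to 1,$$
which is exact on the étale site because $\gcd(r, \cchar \kk) = 1$. Since $\kk$ is algebraically closed, $\H^0_{et}(A, \G_m) = \kk^\times$ is $r$-divisible, so the connecting map identifies $\H^1_{et}(A, \mu_r) \xrightarrow{\sim} \Pic(A)[r] = \H^1_{et}(A, \G_m)[r]$; let $\tilde\alpha, \tilde\beta, \tilde\gamma \in \H^1_{et}(A, \mu_r)$ denote the classes attached to $\alpha, \beta, \gamma$. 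The fixed primitive root $\zeta$ determines a symmetric isomorphism of étale sheaves $\mu_r^{\otimes 2} \xrightarrow{\sim} \mu_r$, and composing the cup product with this identification and with the map induced by $\mu_r \hookrightarrow \G_m$ produces a pairing
$$\H^1_{et}(A, \mu_r) \times \H^1_{et}(A, \mu_r) \xrightarrow{\ \cup\ } \H^2_{et}(A, \mu_r^{\otimes 2}) \xrightarrow{\sim} \H^2_{et}(A, \mu_r) \to \H^2_{et}(A, \G_m)[r] = \Br(A)[r].$$

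The substantive step, and the one I expect to be the main obstacle, is to check that this pairing computes the Hilbert symbol, namely that $\{\alpha, \beta\}_r$ is the image of $\tilde\alpha \cup \tilde\beta$. I would verify this directly over the affine trivializing opens $U$ appearing in the definition of $\mathcal{A}(\alpha, \beta)$. On such a cover the Kummer class $\tilde\alpha$ is represented by an explicit $\mu_r$-valued Čech cocycle built from the transition functions of $\alpha$ together with the $r$-th roots supplied by the rigidification $\phi$, and likewise for $\tilde\beta$. The standard dictionary between cup products of $\mu_r$-cocycles and cyclic (symbol) algebras then recovers exactly the $\Osh_U$-algebra generated by $\mathbf{e}_\alpha, \mathbf{e}_\beta$ subject to $\mathbf{e}_\alpha^r = a$, $\mathbf{e}_\beta^r = b$ and $\mathbf{e}_\alpha \mathbf{e}_\beta = \zeta\, \mathbf{e}_\beta \mathbf{e}_\alpha$, which is $\mathcal{A}(\alpha, \beta)|_U$. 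The delicate bookkeeping here lies in matching normalizations on the two sides, so that the fixed root $\zeta$ and the rigidifications enter with the correct exponents; one also checks that the resulting Brauer class is insensitive to the choice of rigidification, using again the $r$-divisibility of $\kk^\times$. This identification is precisely the content of \cite{Kaji:1988}.

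Granting the identification, all three assertions follow formally. The connecting map of the Kummer sequence is a homomorphism, so $\widetilde{\alpha \otimes \beta} = \tilde\alpha + \tilde\beta$, and the cup product is biadditive over $\ZZ/r$; combined with the identification this gives bilinearity in the first slot, item (i), and symmetrically in the second slot, item (ii). For antisymmetry, item (iii), I would use the graded-commutativity of the cup product: for degree-one classes $\tilde\alpha \cup \tilde\beta = (-1)^{1\cdot 1}\, \tilde\beta \cup \tilde\alpha = -\,\tilde\beta \cup \tilde\alpha$, where the symmetry of the $\zeta$-identification $\mu_r^{\otimes 2} \xrightarrow{\sim} \mu_r$ ensures that the swap of coefficients contributes no further sign. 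Passing to $\Br(A)[r]$ yields $\{\alpha, \beta\}_r + \{\beta, \alpha\}_r = 0$.

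As a consistency check, and to avoid the cohomological bookkeeping altogether for item (iii), I would note the purely algebraic argument: the assignment $\mathbf{e}_\alpha \mapsto \mathbf{e}_\alpha$, $\mathbf{e}_\beta \mapsto \mathbf{e}_\beta$ exhibits an $\Osh_A$-algebra isomorphism $\mathcal{A}(\beta, \alpha) \cong \mathcal{A}(\alpha, \beta)^{\mathrm{op}}$, since reversing the product turns the relation $\mathbf{e}_\alpha \mathbf{e}_\beta = \zeta\, \mathbf{e}_\beta \mathbf{e}_\alpha$ into its $\zeta^{-1}$-analogue, which is the defining relation of $\mathcal{A}(\beta,\alpha)$; as $[\mathcal{A}^{\mathrm{op}}] = -[\mathcal{A}]$ in the Brauer group, item (iii) follows. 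The analogous algebraic proof of (i) and (ii) — a Brauer equivalence $\mathcal{A}(\alpha \otimes \beta, \gamma) \sim \mathcal{A}(\alpha, \gamma) \otimes_{\Osh_A} \mathcal{A}(\beta, \gamma)$, globalizing the standard identity for symbol algebras over a field — is considerably heavier, which is why I would prefer to route (i) and (ii) through the cup product.
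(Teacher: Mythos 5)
Your proposal is correct and takes essentially the same route as the paper, whose entire proof is the remark that the proposition ``can be checked locally via a cup-product calculation'' with a citation to \cite[Corollary 2.6]{Kaji:1988}; you have simply made that cup-product argument explicit (Kummer sequence, biadditivity, graded-commutativity), while deferring the same substantive step --- the identification of $\{\alpha,\beta\}_r$ with the image of $\tilde\alpha\cup\tilde\beta$ --- to Kaji, exactly as the paper does. Your supplementary opposite-algebra argument for (iii), via $\mathcal{A}(\beta,\alpha)\cong\mathcal{A}(\alpha,\beta)^{\mathrm{op}}$, is a correct and slightly more elementary alternative for that item.
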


\begin{proof}
This can also be checked locally via a cup-product calculation, \cite[Corollary 2.6]{Kaji:1988}.
\end{proof}

The first main result for $r$-torsion in the Brauer group of $X$ is stated in the following way.

\begin{theorem}[{\cite[Chapter IV]{Milne:Etale}}]\label{Ab:var:Brauer:presentation}
Let $X$ be a nonsingular projective variety over an algebraically closed field $\kk$ and fix a positive integer $r$ which is not divisible by the characteristic of $\kk$.  In this context, the $r$-torsion part of the Brauer group $\Br(X)$ is generated by the Hilbert symbols.  Furthermore, the group $\Br(X)[r]$ sits in an exact sequence
$$
0 \rightarrow \Pic(X) / r \Pic(X) \rightarrow \H^2_{\mathrm{et}}(X,\mu_r)  \rightarrow \Br(X)[r] \rightarrow 0.
$$
\end{theorem}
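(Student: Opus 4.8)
The plan is to derive the exact sequence from the Kummer sequence and then to identify the middle term and the Hilbert symbols by purely cohomological means.

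First I would invoke the Kummer sequence of \'etale sheaves on $A$,
$$
1 \rightarrow \mu_r \rightarrow \G_m \xrightarrow{r} \G_m \rightarrow 1,
$$
which is exact precisely because $\gcd(r, \cchar \kk) = 1$.  Passing to \'etale cohomology and using $\H^1_{et}(A, \G_m) = \Pic(A)$ together with $\H^2_{et}(A, \G_m) = \Br(A)$, the relevant portion of the long exact sequence reads
$$
\Pic(A) \xrightarrow{r} \Pic(A) \rightarrow \H^2_{et}(A, \mu_r) \rightarrow \Br(A) \xrightarrow{r} \Br(A).
$$
Reading off the cokernel of multiplication by $r$ on $\Pic(A)$ and the kernel of multiplication by $r$ on $\Br(A)$ yields the asserted short exact sequence
$$
0 \rightarrow \Pic(A)/r\Pic(A) \rightarrow \H^2_{et}(A, \mu_r) \rightarrow \Br(A)[r] \rightarrow 0.
$$

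Second, I would identify the middle term with $\Hom(\bigwedge^2 A[r], \mu_r)$.  Since $A$ is proper and connected over the algebraically closed field $\kk$, its global units are $\kk^*$, which is $r$-divisible; hence the same Kummer boundary gives $\H^1_{et}(A, \mu_r) = \Pic(A)[r]$.  Because $\NS(A)$ is torsion-free, $\Pic(A)[r] = \Pic^0(A)[r] = \hat{A}[r]$, and the Weil pairing identifies $\hat{A}[r] = \Hom(A[r], \mu_r)$.  The structural input is that the \'etale cohomology of an abelian variety is the exterior algebra on $\H^1$; in degree two this gives
$$
\H^2_{et}(A, \mu_r) \cong \bigwedge^2 \H^1_{et}(A, \mu_r) = \bigwedge^2 \Hom(A[r], \mu_r) = \Hom\left(\bigwedge^2 A[r], \mu_r\right),
$$
where, the base field being algebraically closed, I suppress the Tate twist on $\mu_r^{\otimes 2}$.

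Third, for the generation statement I would show that the Hilbert symbol $\{\alpha, \beta\}_r$ is exactly the image in $\Br(A)[r]$ of the cup product $\alpha \cup \beta \in \H^2_{et}(A, \mu_r)$, where $\alpha$ and $\beta$ are viewed in $\H^1_{et}(A, \mu_r) = \Pic(A)[r]$.  Granting this identification, the exterior-algebra description shows that $\H^2_{et}(A, \mu_r)$ is spanned by such cup products, and since the last map of the short exact sequence is surjective, the classes $\{\alpha, \beta\}_r$ generate $\Br(A)[r]$.  The hard part will be the identification of the cyclic algebra class with the cup product: over a field this is the familiar statement that a cyclic algebra represents the cup product of two Kummer classes, but here it must be carried out globally over $A$, using the local trivializations and the units $a, b$ recorded earlier in a \v{C}ech computation compatible with the Kummer boundary map.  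A secondary technical point is to confirm the exterior-algebra structure of $\H^*_{et}(A, \mu_r)$ in the \'etale setting over an arbitrary algebraically closed $\kk$, which I would extract from the K\"unneth formula together with the rank $2g$ computation of $\H^1_{et}(A,\mu_r)$.
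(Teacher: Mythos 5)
Your proposal is correct and takes essentially the same route as the paper: the paper's own proof consists exactly of the observation that the sequence is induced by the Kummer sequence, with all remaining details deferred to the cited references (Kaji, Berkovich, Brion). Your three steps --- the Kummer boundary argument, the identification of $\H^2_{et}(A,\mu_r)$ via the exterior-algebra structure and the Weil pairing, and the identification of Hilbert symbols with cup products (which you rightly flag as the technical core) --- are precisely the content of those cited arguments, so you have in effect written out what the paper cites.
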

\begin{proof}
The key point is that this exact sequence is induced by the Kummer sequence
$$
1 \rightarrow \mu_r \rightarrow \G_m \xrightarrow{\cdot r} \G_m \rightarrow 1.
$$
We refer to \cite[Chapter IV]{Milne:Etale} for more details.
\end{proof}

For the case of Abelian varieties, Theorem \ref{Ab:var:Brauer:presentation} takes a more explicit form.

\begin{corollary}[\cite{Berkovich:1972}, \cite{Kaji:1988}]\label{Ab:var:Brauer:presentation:rank}  Let $A$ be a $g$-dimensional Abelian variety over an algebraically closed field $\kk$.  Let $\rho$ be the Picard number of $A$.  In this context, the $r$-torsion part of $\Br(A)[r]$, for $r$ not divisible by the characteristic of $\kk$, is a free $\ZZ / r \ZZ$-module of rank $\binom{2g}{2} - \rho$.
\end{corollary}
\begin{proof}
Since
$$\H^2_{\mathrm{et}}(A,\mu_r) = \Hom\left(\bigwedge^2 A[r], \mu_r\right)\text{,}$$
the conclusion of Corollary \ref{Ab:var:Brauer:presentation:rank} follows from Theorem \ref{Ab:var:Brauer:presentation}.
\end{proof}

\begin{example}\label{E1:times:E2:examples}
As an illustration of Corollary \ref{Ab:var:Brauer:presentation:rank}, it is helpful to consider the case of a complex Abelian variety $A$ that is the product of elliptic curves.  By \cite{Skorobogatov:Zarhin:2012}, see also \cite{Skorobogatov:Zarhin:2017}, the discussion that follows also allows for a description of the Brauer group of $\operatorname{Kum}(A)$, the nonsingular Kummer variety that is determined by $A$.

First, let $\mathfrak{h}$ denote the upper half plane and consider the case that 
$$A \simeq E_1 \times E_2$$ for elliptic curves 
$$E_i \simeq \mathbb{C} / \Lambda_i\text{,}$$ where 
$$\Lambda_i = \ZZ \oplus \tau_i \ZZ$$
 and $\tau_i \in \mathfrak{h}$, $i=1,2$.  
 Then
\begin{align*}
\operatorname{Br}(A) & \simeq \Hom_{\ZZ}(T(A),\QQ/\ZZ) \\
& \simeq \Hom_{\ZZ}(T(A),\ZZ)\otimes \QQ/\ZZ \\
& \simeq \H^2(A,\ZZ)/\operatorname{NS}(A) \otimes \QQ / \ZZ
\end{align*}
where 
$$
T(A) \simeq \operatorname{NS}(A)^{\perp}
$$
is the transcendental lattice.

In order to determine the N\'{e}ron-Severi group
$$
\operatorname{NS}(A) \simeq \H^{1,1}(A,\ZZ) \text{,}
$$
let $x_1,\dots,x_4$ be real coordinates corresponding to the dual basis of
\begin{align*}
\H_1(X,\ZZ)  & = \H_0(E_2,\ZZ)  \otimes \H_1(E_1,\ZZ) \oplus \H_1(E_2,\ZZ) \otimes \H_0(E_1,\ZZ) \\
& = \Lambda_1 \oplus \Lambda_2 \text{.}
\end{align*}
Let 
$$\omega_1 = dx_1 + \tau_1 dx_3$$ 
and 
$$\omega_2 = d x_3 + \tau_2 d x_4 \text{.}$$  
Then 
$$\overline{\omega}_1 = dx_1 + \overline{\tau}_1 d x_2$$ 
and
$$\overline{\omega}_2 = d x_3 + \overline{\tau}_2 d x_4\text{;}$$ 
the ordered bases
$$
dx_1 \wedge dx_2, dx_1 \wedge dx_3, dx_1 \wedge d x_4, dx_2 \wedge dx_3, dx_2 \wedge dx_4,dx_3\wedge dx_4
$$
and
$$
\omega_1 \wedge \omega_2,\omega_1 \wedge \overline{\omega}_1,\omega_1 \wedge \overline{\omega}_2,\omega_2 \wedge \overline{\omega}_1 ,\omega_2 \wedge \overline{\omega}_2,\overline{\omega}_1\wedge\overline{\omega}_2 \text{, }
$$
for $\H^2(A,\CC)$, are related by the transition matrix
$$
\left( \begin{matrix}
0 & \frac{\overline{\tau}_1 \ \overline{\tau}_2}{(\overline{\tau}_1-\tau_1)(\overline{\tau}_2-\tau_2)} & \frac{-\overline{\tau}_1}{(\overline{\tau}_1-\tau_1)(\overline{\tau}_2-\tau_2)} & \frac{-\overline{\tau}_2}{(\overline{\tau}_1-\tau_1)(\overline{\tau}_2-\tau_2)} & \frac{1}{(\overline{\tau}_1-\tau_1)(\overline{\tau}_2-\tau_2)} & 0 \\
\frac{\overline{\tau}_1-\tau_1}{(\overline{\tau}_1-\tau_1)^2} & 0 & 0 & 0 & 0 &0 \\
0 & \frac{-\overline{\tau}_1\tau_2}{(\overline{\tau}_1-\tau_1)(\overline{\tau}_2-\tau_2)} & \frac{\overline{\tau}_1}{(\overline{\tau}_1-\tau_1)(\overline{\tau}_2-\tau_2)} &\frac{\overline{\tau}_2}{{(\overline{\tau}_1-\tau_1)(\overline{\tau}_2-\tau_2)}} & \frac{-1}{(\overline{\tau}_1-\tau_1)(\overline{\tau}_2-\tau_2)} & 0 \\
0&\frac{\tau_1 \overline{\tau}_2}{(\overline{\tau}_1-\tau_1)(\overline{\tau}_2-\tau_2)} & \frac{-\tau_1}{(\overline{\tau}_1-\tau_1)(\overline{\tau_2}-\tau_2)} & \frac{-\overline{\tau_2}}{(\overline{\tau}_1-\tau_1)(\overline{\tau}_2-\tau_2)} & \frac{1}{(\overline{\tau}_1-\tau_1)(\overline{\tau}_2-\tau_2)} & 0 \\
0 & 0&0 &0 & 0& \frac{\overline{\tau}_2-\tau_2}{(\overline{\tau}_2-\tau_2)^2} \\
0 & \frac{\tau_1 \tau_2}{(\overline{\tau}_1-\tau_1)(\overline{\tau}_2-\tau_2)} & \frac{-\tau_1}{(\overline{\tau}_1-\tau_1)(\overline{\tau}_2-\tau_2)} & \frac{-\tau_2}{(\overline{\tau}_1-\tau_1)(\overline{\tau}_2-\tau_2)} & \frac{1}{(\overline{\tau}_1-\tau_1)(\overline{\tau}_2-\tau_2)} & 0 
\end{matrix} \right) 
$$

Among other things, this matrix allows for calculation of the Picard number $\rho(A)$. For instance, the N\'{e}ron-Severi group $\H^{1,1}(A,\ZZ)$ consists exactly of those forms
$$
a_{12}dx_1\wedge dx_2 + a_{13}dx_1\wedge dx_3 + a_{14}dx_1\wedge dx_4 + a_{23}dx_2\wedge dx_3 + a_{24} dx_2 \wedge dx_4 + a_{34}dx_3 \wedge dx_4
$$
where 
$a_{12},a_{13},a_{14},a_{23},a_{24},a_{34} \in \ZZ$ 
and the collection of integers $a_{13}, a_{14}, a_{23}, a_{24}$  satisfies the equation
\begin{equation}\label{Hodge:bilinear:eqns}
b_{13}\tau_1 \tau_2 - b_{14}\tau_1 - b_{23}\tau_2+b_{24} = 0 \text{,}
\end{equation}
for unknown integers $b_{13}$, $b_{14}$, $b_{23}$ and $b_{24}$.
In more explicit terms, the following assertions hold true.
\begin{itemize}
\item{If $\rho(A) = 2$, then $\H^{1,1}(A,\ZZ)$ is spanned by the forms
\begin{enumerate}
\item[(i)]{
$dx_1 \wedge dx_2$; and
}
\item[(ii)]{ $dx_3 \wedge dx_4$.
}
\end{enumerate}
}
\item{ 
If $\rho(A) = 3$, then $\H^{1,1}(A,\ZZ)$ is spanned by the forms
\begin{enumerate}
\item[(i)]{
$dx_1 \wedge dx_2$;
}
\item[(ii)]{$dx_3 \wedge dx_4$; and
}
\item[(iii)]{
$
a_{13} dx_1 \wedge dx_3 + a_{14} dx_1 \wedge dx_4 + a_{23} dx_2 \wedge dx_3 + a_{24} dx_2 \wedge dx_4 \text{,}
$
where $a_{13}, a_{14}, a_{23}, a_{24} \in \ZZ$ form a solution of the equation \eqref{Hodge:bilinear:eqns}.
}
\end{enumerate}
}
\item{
If $\rho(A) = 4$, then $\H^{1,1}(A,\ZZ)$ is spanned by the forms
\begin{enumerate}
\item[(i)]{$dx_1\wedge dx_2$;
}
\item[(ii)]{
$dx_3\wedge dx_4$;
}
\item[(iii)]{
$
a_{13} dx_1 \wedge dx_3 + a_{14} dx_1 \wedge dx_4 + a_{23} dx_2 \wedge dx_3 + a_{24} dx_2 \wedge dx_4 \text{;}
$
and
}
\item[(iv)]{
$
a_{13}' dx_1 \wedge dx_3 + a_{14}' dx_1 \wedge dx_4 + a_{23}' dx_2 \wedge dx_3 + a_{24}' dx_2 \wedge dx_4 \text{.}
$
}
\end{enumerate}
Here, in (iii) and (iv) above, the collection integers 
$$a_{13}, a_{14}, a_{23}, a_{24}, a_{13}', a_{14}', a_{23}', a_{24}' \in \ZZ$$ 
form two solutions to the equation \eqref{Hodge:bilinear:eqns}.
}
\end{itemize}
In particular, the Picard number $\rho(A)$ is described as
$$
\rho(A) = 
\begin{cases}
2 & \text{if and only if $E_1$ and $E_2$ are not isogenous} \\
3 & \text{if and only if $E_1$ and $E_2$ are isogenous and do not have CM} \\
4 & \text{ if and only if $E_1$ and $E_2$ are isogenous and have CM.}
\end{cases}
$$

Turing to the case that 
$$A \simeq E_1\times\dots\times E_g$$ 
is the $g$-fold product of a collection of elliptic curves 
$$E_i \simeq \CC / \Lambda_i\text{,}$$ 
$\tau_i \in \mathfrak{h}$, $i=1,\dots,g$, the above discussion combined with the Theorem of the Cube, \cite{Mum:v1}, allows for calculation of the Picard number $\rho(A)$.  For example, the Picard number $\rho(A)$ is described as
\begin{equation}\label{picard:number:g:fold:elliptic:curve:product}
\rho(A) = g + \sum\limits_{i=1}^{g-1} \sum\limits_{j=i+1}^g \rho_{\mathrm{new}}(\E_i \times \E_j)\text{;} 
\end{equation} 
here
$$\rho_{\mathrm{new}}(\E_i \times \E_j) = 
\begin{cases} 
0 & \text{ if and only if $\E_i$ and $\E_j$ are not isogenous} \\
1 & \text{ if and only if $\E_i$ and $\E_j$ are isogenous and do not have CM} \\
2 & \text{if and only if $\E_i$ and $\E_j$ are isogenous and have CM.}
\end{cases} $$

Finally, turning to the Brauer group of such a $g$-fold product of elliptic curves $A$, applying Corollary \ref{Ab:var:Brauer:presentation:rank}, it follows that $\operatorname{Br}(A)[r]$ is a free $\ZZ/r\ZZ$-module of rank $\binom{2g}{2} - \rho(A)$, where $\rho(A)$ is described in \eqref{picard:number:g:fold:elliptic:curve:product}. 
\end{example}

\section{Nondegenerate theta groups and the Schr\"{o}dinger Representations}\label{weight:one:theta}

In this section, for later use, we discuss the weight $1$ representation theory of nondegenerate theta groups.  It is a special case of the general theory given in \cite[Chapter 5]{Moret-Bailly}, which  generalizes previous work of Mumford \cite[Section 1]{MumI} and \cite[Section 6]{MumII}.  For our purposes, we let $S := \spec(\kk)$ for $\kk$ an algebraically closed base field.

Let $\pi \colon \K \rightarrow S$ be finite commutative group scheme, locally free of rank $r^2$, and fix a \emph{nondegenerate theta group} over $\K$
\begin{equation}\label{non:deg:theta:eqn1}
1 \rightarrow \G_m \rightarrow \G \rightarrow \K \rightarrow 1.
\end{equation}
Equivalently, \eqref{non:deg:theta:eqn1} is a central extension and the alternating form
\begin{equation}\label{weight:decomp:eqn2}
e \colon \K \times \K \rightarrow \G_m,
\end{equation}
which is induced by the commutator of $\G$,
is a perfect (nondegenerate) pairing.   

Let $M$ be a \emph{$\G$-module}.  Restricting the $\G$-action to $\G_m$, yields the weight decomposition 
\begin{equation}\label{weight:decomp:eqn3}
M = \bigoplus_{i \in \ZZ} M^{(i)}.
\end{equation}
In \eqref{weight:decomp:eqn3}, $M^{(i)}$ is the largest submodule of $M$ where $\G_m$ acts by the character $\lambda \mapsto \lambda^i$.  The $\G$-module $M$ is said to be of \emph{weight} $i \in \ZZ$ in case that $M = M^{(i)}$.

Fix a line bundle $\mathcal{L}$ on $\K$ so that
$$
\G = \operatorname{Isom}_{\Osh_\K-\mathrm{mod}}(\Osh_\K,\mathcal{L}).
$$
Put
$$
A^{(i)}(\G) = \pi_*(\mathcal{L}^{\otimes {- i}})
$$
and consider the natural action of $\G$ on its coordinate ring
\begin{equation}\label{weight:decomp:eqn4}
A(\G) = \bigoplus_{i \in \ZZ} A^{(i)}(\G).
\end{equation}
Note that each $A^{(i)}(\G)$ is a locally free $\Osh_S$-module of rank $r^2$.

The natural action of $\G \times \G$ on $A(\G)$ respects the decomposition \eqref{weight:decomp:eqn4}.  Let $E^{(1)}$ be the $(\G \times \G)$-module determined by $A^{(1)}(\G)$ and $E_+^{(1)}$, respectively $E_-^{(1)}$, the $\G$-module obtained by restricting to $\{1 \} \times \G$, respectively $\G \times \{1\}$, the action of $\G \times \G$ on $E^{(1)}$.  Then $E^{(1)}_+$ has weight $1$ whereas $E^{(1)}_-$ has weight $-1$.

\noindent
\begin{lemma}  
In the above notation, $E^{(1)}$ is an irreducible $\G \times \G$-module.  In particular, each $\G \times \G$-submodule of $E^{(1)}$ is of the form $I \cdot E^{(1)}$ where $I$ is an ideal of $\Osh_S$.
\end{lemma}
\begin{proof}
This is a special case of \cite[Lemma 2.3, page 111]{Moret-Bailly}.
\end{proof}

Let $\mathrm{Rep}^1(\G)$ be the Abelian category of weight one representations.  Fix $V \in \mathrm{Rep}^1(\G)$ and suppose that $V$ is locally free of rank $r$ as an $\Osh_S$-module.  Let $V^\vee$ be the dual of $V$.  In this context, there exists a natural homomorphism
\begin{equation}\label{theta:natural:homomorphism:eqn1}
V \otimes_{\Osh_S} V^\vee \rightarrow A^{(1)}(\G) = E^{(1)}
\end{equation}
which is defined by
$$
v \otimes v^\vee \in V \otimes V^\vee \mapsto (g \mapsto \langle gv, v^\vee \rangle).
$$
This is a $\G \times \G$ morphism with $\G \times \{1 \}$ acting by the given $\G$-representation and $\{1\} \times \G$ acting by the natural dual action.

The weight $1$ representation theory of $\G$ is described as follows.

\begin{theorem}[{\cite{Moret-Bailly}}]
The following assertions hold true.
\begin{enumerate}
\item[(i)]{
The natural homomorphism \eqref{theta:natural:homomorphism:eqn1} is an isomorphism.
}
\item[(ii)]{
If $M$ is a $\G$-module with weight $1$, then the natural $\G$-morphism
$$ 
V \otimes \Hom_\G(V,M)^{\mathrm{triv}} \rightarrow M
$$
defined by
$$
v \otimes u \mapsto u(v)
$$
is an isomorphism.
}
\item[(iii)]{
If $N$ is a quasi-coherent $\Osh_S$-module, then the natural $\G$-morphism
$$
N \mapsto \Hom_\G(V,V\otimes N^{\mathrm{triv}})
$$
defined by 
$$
x \mapsto (v \mapsto v \otimes x)
$$
is an isomorphism.
}
\end{enumerate}
\end{theorem}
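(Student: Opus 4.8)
The plan is to reduce all three assertions to part (i) together with the Morita equivalence attached to a split Azumaya algebra. The organizing object is $E^{(1)} = A^{(1)}(\G)$: besides its $(\G \times \G)$-module structure it carries the structure of an associative $\Osh_S$-algebra of rank $r^2$, the twisted (convolution) group algebra of the nondegenerate theta group \eqref{non:deg:theta:eqn1}. Perfectness of the pairing \eqref{weight:decomp:eqn2} is precisely what makes this algebra Azumaya over $\Osh_S$, and, in the formalism of \cite[Chapter 5]{Moret-Bailly}, the category $\mathrm{Rep}^1(\G)$ of weight $1$ representations is equivalent to the category of modules over it. I would import this framework as the starting point. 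Granting it, the three statements become, respectively, the assertion that $V$ splits this Azumaya algebra and the unit/counit isomorphisms of the resulting equivalence; so the first genuine step is to prove (i).

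First I would prove (i). The map \eqref{theta:natural:homomorphism:eqn1} is a morphism of $(\G \times \G)$-modules by construction, and it is nonzero: choosing local sections $v$ and $v^\vee$ with $\langle v, v^\vee \rangle = 1$, the image of $v \otimes v^\vee$ takes the value $1$ at the identity section of $\G$. By the preceding Lemma, $E^{(1)}$ is irreducible as a $(\G \times \G)$-module, so the image of any nonzero equivariant map is all of $E^{(1)}$; hence \eqref{theta:natural:homomorphism:eqn1} is surjective. Since both $V \otimes_{\Osh_S} V^\vee$ and $E^{(1)} = A^{(1)}(\G)$ are locally free $\Osh_S$-modules of the same rank $r^2$, a surjection between them is an isomorphism. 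This proves (i), and at the same time it identifies the acting Azumaya algebra with $\End_{\Osh_S}(V)$, exhibiting $V$ as a splitting module.

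With (i) in hand, parts (ii) and (iii) are the two halves of the Morita equivalence between $\Osh_S$-modules and $\End_{\Osh_S}(V)$-modules determined by the bimodule $V$. Under the dictionary above, a weight $1$ $\G$-module $M$ is the same as an $\End_{\Osh_S}(V)$-module, and $\Hom_\G(V,M)^{\mathrm{triv}}$ coincides with $\Hom_{\End(V)}(V,M)$. The two standard matrix-algebra isomorphisms $V \otimes_{\Osh_S} V^\vee \cong \End_{\Osh_S}(V)$ (which is exactly (i)) and $V^\vee \otimes_{\End(V)} V \cong \Osh_S$ then produce the counit isomorphism $V \otimes_{\Osh_S} \Hom_\G(V,M)^{\mathrm{triv}} \xrightarrow{\sim} M$ of (ii) and the unit isomorphism $N \xrightarrow{\sim} \Hom_\G(V, V \otimes N^{\mathrm{triv}})$ of (iii). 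In each case one checks that the abstract Morita isomorphism agrees with the explicit evaluation map $v \otimes u \mapsto u(v)$, respectively $x \mapsto (v \mapsto v \otimes x)$, written in the statement.

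The hard part will not be any single computation but the relativization over the base $S$. Over a field, (ii) and (iii) are just Schur's lemma together with isotypic decomposition; over a general $S$ one cannot argue fibrewise, and must instead use genuinely that the acting algebra is Azumaya and that $V$ splits it, so that Morita theory applies integrally. Thus the real work is (a) establishing the convolution-algebra structure on $A^{(1)}(\G)$ and its Azumaya property from nondegeneracy of \eqref{non:deg:theta:eqn1}, and (b) verifying the equivalence between $\mathrm{Rep}^1(\G)$ and modules over this algebra, compatibly with the functors $\Hom_\G(V,-)$ and $V \otimes_{\Osh_S} (-)$. These two points are where I would lean on the structure theory of \cite[Chapter 5]{Moret-Bailly}.
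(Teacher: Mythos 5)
The paper itself gives no proof of this theorem: its ``proof'' is the single sentence citing Moret-Bailly, Theorem 2.4.2, page 112. So your sketch is, by construction, a different route --- it supplies an actual argument where the paper only defers. Its outline is correct and is, in substance, the standard proof of this result: (i) follows from equivariance, nonvanishing, irreducibility of $E^{(1)}$, and equality of ranks; dualizing \eqref{theta:natural:homomorphism:eqn1} identifies the convolution algebra $\bigl(A^{(1)}(\G)\bigr)^\vee$ with $\End_{\Osh_S}(V)$ via the representation map; and then (ii) and (iii) are exactly the counit and unit of the Morita equivalence attached to the faithfully projective module $V$. Two genuine merits of your packaging: the Morita step needs no finiteness hypothesis on $M$ or $N$ and no linear reductivity of $\G$, so it is valid without assuming $\gcd(r,\cchar\kk)=1$ --- which matters, since this section of the paper explicitly drops that hypothesis --- and it isolates the single nonformal input as the irreducibility Lemma that immediately precedes the theorem.

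Two cautions, one of which is a real (though reparable) gap. First, over a general base $S$ the word ``irreducible'' in that Lemma cannot mean ``no nonzero proper sub-$(\G\times\G)$-modules'': for any proper nonzero ideal sheaf $I\subseteq\Osh_S$, the submodule $I\cdot E^{(1)}$ is nonzero, proper and equivariant. The Lemma must be read fiberwise (equivalently, with subobjects restricted to those that are locally direct summands), so your inference ``the image of any nonzero equivariant map is all of $E^{(1)}$'' is not available as stated. The repair is short: your evaluation-at-the-identity argument shows the map \eqref{theta:natural:homomorphism:eqn1} is nonzero on every geometric fiber; fiberwise irreducibility then gives fiberwise surjectivity, Nakayama upgrades this to surjectivity over $S$, and your equal-rank argument gives injectivity. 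Second, be careful not to import ``nondegeneracy of \eqref{non:deg:theta:eqn1} implies the convolution algebra is Azumaya'' as a black-box input: that assertion is essentially equivalent to the theorem being proved, so using it as a starting point courts circularity. In your own outline it is in fact an \emph{output} of (i); the only imports you genuinely need are the formal duality between weight-$1$ comodules and modules over $\bigl(A^{(1)}(\G)\bigr)^\vee$ (harmless, since $A^{(1)}(\G)$ is finite locally free), the paper's irreducibility Lemma, and classical Morita theory over a commutative ring.
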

\begin{proof}
This is a special case of \cite[Theorem 2.4.2, page 112]{Moret-Bailly}.
\end{proof}

The \emph{regular weight $1$}, or \emph{Schr\"{o}dinger Representations} of $\G$ are determined by level and Lagrangian subgroups.

\begin{defn}
A \emph{level subgroup} of $\G$ is a subgroup scheme $\H \subseteq \G$, finite locally free over $S$ and such that $\H \bigcap \G_m = \{1 \}$.  A level subgroup is \emph{Lagrangian} in case that it has rank $r$ over $S$.
\end{defn}

The \emph{Schr\"{o}dinger Representations} are constructed as follows.

\begin{proposition}\label{level:subgroup:weight1:rep}
Let $\H \subseteq \G$ be a level subgroup scheme of rank $r'$ and put
$$
V = \left(E_-^{(1)}\right)^{\H} = \left(E^{(1)}\right)^{\{1\} \times \H}.
$$
Then $V$ is a $\G$-submodule of $E^{(1)}_+$, locally free of rank $r^2 / r'$ as a $\Osh_S$-module.  In particular, if $\H$ is Lagrangian, then $V$ is locally free of rank $r$.
\end{proposition}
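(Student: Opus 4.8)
The plan is to realize $V$ as the space of weight-$1$ functions on a quotient of $\G$ and to read off its rank from a $\G_m$-torsor structure. Since $\H \cap \G_m = \{1\}$, the projection $\G \to \K$ restricts to a closed immersion identifying $\H$ with a finite locally free subgroup scheme $\bar{\H} \subseteq \K$ of rank $r'$; I set $\K' = \K / \bar{\H}$, a finite locally free $S$-group scheme of rank $r^2/r'$. The two factors of $\G \times \G$ act on $A(\G)$ by commuting left and right translations, and $E^{(1)} = A^{(1)}(\G)$ is the weight-$1$ graded piece of $A(\G)$. Because the copy of $\H$ whose invariants define $V$ lies in the factor giving the $E_-^{(1)}$-structure, and this factor commutes with the factor giving $E_+^{(1)}$, the invariant subsheaf $V$ is automatically stable under the $E_+^{(1)}$-action; this produces the asserted $\G$-module structure and realizes $V$ as a $\G$-submodule of $E_+^{(1)}$, which settles the first claim.

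For the rank I would form the fppf quotient $Y := \G / \H$ by the free right action of $\H$. As $\G$ is affine over $S$—it is the total space of the $\G_m$-torsor $\mathcal{L}$ over the finite $S$-scheme $\K$—and $\H$ is finite locally free, $Y$ exists as an affine $S$-scheme with $\Gamma(Y,\Osh_Y) = A(\G)^{\H}$, and $V$ is precisely the weight-$1$ graded piece of this invariant ring. The central $\G_m \subseteq \G$ commutes with the $\H$-action and acts freely on $Y$, so $Y \to Y/\G_m = \K/\bar{\H} = \K'$ is a $\G_m$-torsor, i.e. the total space of a line bundle $\mathcal{L}'$ on $\K'$ minus its zero section. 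Hence the weight-$1$ part of $\Gamma(Y,\Osh_Y)$ is $p'_*(\mathcal{L}'^{\otimes -1})$, where $p' \colon \K' \to S$ is the structure map. Since $p'$ is finite locally free of rank $r^2/r'$, this pushforward is locally free of rank $r^2/r'$ over $\Osh_S$, giving the desired rank of $V$; the Lagrangian case $r' = r$ then yields rank $r$.

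I expect the main obstacle to be the careful treatment of the quotient and of the invariants in the possibly non-reduced setting of this section, where $\gcd(r,\cchar \kk) = 1$ is not assumed, so that $\H$ and $\bar{\H}$ need not be \'etale. There ``$\H$-invariants'' must be taken scheme-theoretically, as the equalizer computing $\Hom_{\H}(\Osh_S, -)$, and $Y$ must be the fppf quotient. The facts I would lean on are that a free action of a finite locally free group scheme on an affine scheme admits an affine quotient realized by the ring of invariants, with $\G \to Y$ an $\H$-torsor, so that the formation of $V$ commutes with faithfully flat base change on $S$; local freeness and the rank of $V$ can then, if one wishes, be verified after an fppf base change splitting the extension and trivializing $\mathcal{L}$, reducing to a direct count. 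A more representation-theoretic alternative, closer to the preceding theorem, is to use the isomorphism $E^{(1)} \cong V_0 \otimes V_0^\vee$ for a fixed weight-$1$ representation $V_0$ of rank $r$ and to identify $V \cong V_0 \otimes (V_0^\vee)^{\H}$; this reduces matters to showing $(V_0^\vee)^{\H}$ has rank $r/r'$, which uses that $\bar{\H}$ is isotropic of order dividing $r$, whereas the quotient argument avoids this point and is more self-contained.
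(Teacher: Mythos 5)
Your proposal is correct, but note that the paper itself gives no argument at all: its entire proof is the citation ``This is a special case of \cite[Proposition 2.5.2, page 115]{Moret-Bailly}.'' So what you have done is supply the self-contained descent argument that the paper outsources, and it is the right one. Your route---identifying $\H$ with a closed subgroup scheme $\overline{\H} \subseteq \K$ via $\H \cap \G_m = \{1\}$, forming the fppf quotients $Y = \G/\H$ and $\K' = \K/\overline{\H}$, and recognizing $V$ as the weight-one graded piece of $\Gamma(Y, \Osh_Y)$, i.e.\ as $p'_*$ of a line bundle on a finite locally free $S$-scheme of rank $r^2/r'$---is in spirit the same mechanism as in Moret-Bailly's own proof, and your handling of the delicate points is sound: scheme-theoretic (equalizer) invariants, existence of the affine quotient with $\G \rightarrow Y$ an $\H$-torsor because the translation action is free, freeness of the residual $\G_m$-action on $Y$ (if $\lambda \bar{g} = \bar{g}$ then $\lambda \in \H \cap \G_m = \{1\}$ by centrality), and compatibility of the grading with taking $\H$-invariants because translation by $\H$ commutes with the central $\G_m$. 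This keeps the argument valid with no hypothesis that $r$ is prime to $\cchar \kk$, matching the stated generality of this section, and your commuting-actions observation also quietly repairs the paper's notational slip (the statement writes $(E^{(1)})^{\{1\}\times\H}$ although $E^{(1)}_-$ was defined via the factor $\G \times \{1\}$; your formulation is indifferent to which factor carries $\H$). You are also right to prefer the quotient argument over the alternative via $E^{(1)} \cong V_0 \otimes V_0^{\vee}$, since the latter presupposes a rank-$r$ weight-one representation $V_0$, which is essentially what the proposition is meant to produce. In short: the paper's citation buys brevity and defers all fppf subtleties to the reference; your argument buys transparency, exhibiting $V$ concretely as the pushforward of a line bundle on $\K/\overline{\H}$, from which local freeness, the rank $r^2/r'$, and the Lagrangian case are immediate.
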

\begin{proof}
This is a special case of \cite[Proposition 2.5.2, page 115]{Moret-Bailly}.
\end{proof}

\begin{remark}  When $r$ is not divisible by the characteristic of $\kk$, then Lagrangian subgroups exist \cite{MumI}.   Further, in this context, the nature of higher weight representations of the nondegenerate theta groups is determined in \cite{Grieve:theta}. 
\end{remark}

\section{Proof of Theorem \ref{Brion:Br:thm}}\label{proof:Brion:thm}

The purpose of this section is to prove Theorem \ref{Brion:Br:thm}.  Our presentation follows Brion's suggestion \cite[Remark 3.13]{Bri}.  For completeness, we restate Theorem \ref{Brion:Br:thm} as Theorem \ref{Brauer:AH} below.  For the case of complex Abelian varieties, it refines \cite[Theorem 1]{Elencwajg:Narasimhan:1983}.

\begin{theorem}\label{Brauer:AH}
Let $A$ be an Abelian variety over an algebraically closed field $\kk$.  Fix a positive integer $r$ which is not divisible by the characteristic of $\kk$.  Let $\operatorname{Br}(A)[r]$ be the $r$-torsion subgroup of the Brauer group of $A$.  Then each class in $\operatorname{Br}(A)[r]$ may be represented by a homogeneous irreducible projective bundle.
\end{theorem}

\begin{proof}[Proof of Theorem \ref{Brion:Br:thm} and \ref{Brauer:AH}]
 First, recall, as in \cite[page  227]{Mum:v1}, that if $M$ is a line bundle on $A$ and if
$$
\K(M) := \{ x \in A : \tau_x^* M \simeq M \} \text{,}
$$
then the commutator of the theta group
$$
\G(M) := \{(\phi,x) : x \in \K(M) \text{ and }  \phi \colon \tau_x^* M \xrightarrow{\sim} M \} \text{,}
$$
determines an alternating bilinear form
$$
e^M \colon \K(M) \times \K(M) \rightarrow \G_m \text{.}
$$
In what follows, if
$$
H \subseteq \K(M)
$$
is a subgroup scheme, then let $e^M|_{H}$ denote the induced alternating form
$$
e^M|_H \colon H \times H \rightarrow \G_m \text{.}
$$

Now, as in Theorem \ref{Ab:var:Brauer:presentation} and Corollary \ref{Ab:var:Brauer:presentation:rank}, the Kummer sequence
$$
1 \rightarrow \mu_r \rightarrow \mathbb{G}_m \xrightarrow{\cdot r} \mathbb{G}_m \rightarrow 1
$$ 
induces an exact sequence
$$
0 \rightarrow \NS(A) / r \NS(A) \xrightarrow{\phi} \H^2_{\mathrm{et}}(A,\mu_r) = \Hom_{\ZZ/r\ZZ}\left( \bigwedge^2 A[r],\mu_r \right ) \xrightarrow{\psi} \H^2_{\mathrm{et}}(A,\mathbb{G}_m)[r] \rightarrow 0.
$$
The map $\phi$ sends the class of a line bundle $L$ to the form $e^{rL}|_{A[r]}$ and the map $\psi$ sends a form $e$ to the class of the Azumaya algebra
\begin{equation}\label{homog:irred:1}
\mathcal{A} := \bigoplus_{\substack{ \alpha \in A^t[r] \\ \sigma \in A[r] }} \alpha \cdot \mathbf{e}_{\sigma}.
\end{equation}
In \eqref{homog:irred:1}, we identify points $\alpha \in A^t[r]$ with the $r$-torsion line bundles $\alpha$ that they determine.  Let 
$$e_r(\cdot,\cdot) \colon A[r] \times A^t[r] \rightarrow \mu_r$$ 
be the canonical pairing.  Then the multiplication in \eqref{homog:irred:1} is defined, for local sections $s_\alpha$ of $\alpha$, and $s_\beta$ of $\beta$, by the condition that
$$
\left(s_\alpha \cdot \mathbf{e}_\sigma \right) \cdot \left( s_\beta \cdot \mathbf{e}_\tau \right) = (e_r(\beta,\sigma) \cdot a_{\sigma,\tau} \cdot s_\alpha \cdot s_\beta)  \cdot \mathbf{e}_{\sigma + \tau}. 
$$
Here 
$$
\{a_{\sigma,\tau} \} \in Z^2(A[r],\G_m)
$$
is a $2$-cocycle that has the property that
$$
e(\sigma,\tau) = a_{\sigma,\tau} \cdot a_{\tau,\sigma}^{-1} \text{.}
$$

The algebra \eqref{homog:irred:1} has rank $r^{2g}$ and is free as a left module over the maximal \'{e}tale subalgebra
$$
\mathcal{L} := \bigoplus_{\alpha  \in A^t[r]} \alpha \cdot \mathbf{e}_0.
$$
By \cite[Corollary 5.5]{Grothendieck:Brauer:I}, it follows that
\begin{equation}\label{homog:irred:2}
[r]_A^*\mathcal{A} \simeq M_{r^{2g}}(\Osh_A).
\end{equation}
Let $\P$ be the $\PP^{r^g-1}$-bundle that corresponds to the Azumaya algebra \eqref{homog:irred:1}.  Then, following the terminology of \cite[page 2493]{Bri}, $\P$ is trivialized by $[r]_A$.  Indeed, this is implied by \eqref{homog:irred:2}.

The above discussion implies that each class in $\Br(A)[r]$ is represented by a homogeneous $\PP^{r^g-1}$-bundle which is trivialized by $[r]_A$.  Write the fixed choice of such a bundle as
\begin{equation}\label{homog:irred:3}
\P \simeq A \times^{A[r]} \PP^{r^g-1} \text{, }
\end{equation}
where the action of $A[r]$ on $\PP^{r^g-1}$ is achieved via some projective representation 
$$
\rho \colon A[r] \rightarrow \PGL_{r^g} 
$$
as in \cite[Proposition 3.9 (i)]{Bri}.  

Let
\begin{equation}\label{homog:irred:6}
e_{\P,r}(\cdot,\cdot) = e(\cdot,\cdot) \colon A[r] \times A[r] \rightarrow \mu_r
\end{equation}
be the alternating bilinear form that is determined by this trivialization, \cite[pages 2494, 2495]{Bri},
and let 
$$
A[r]^\perp = \{x \in A[r] : e(x,y) = 1 \text{ for all } y \in A[r]\}
$$
be the orthogonal complement of $A[r]$ with respect to $e$.  
Then
$$
[A[r]:A[r]^\perp] = d^2,
$$
for $d$ the \emph{homogeneous index} of the bundle $\P$ \cite[page 2487]{Bri}.  It is the minimal rank of a homogeneous subbundle.

Let $S \subseteq \PP^{r^g-1}$ be a linear subspace which is $A[r]$-stable and which is minimal for this property.  Then
$$
\P_1 := A \times^{A[r]} \PP^{d-1}
$$
is a homogeneous irreducible $\PP^{d-1}$-bundle.  In terms of the classification of Theorem \ref{homog:characterize} (i), 
it is obtained via the anti-affine extension
$$
1 \rightarrow A[r]/A[r]^\perp \rightarrow A/A[r]^\perp \rightarrow A/A[r] = A  \rightarrow 1
$$
together with the induced faithful representation
$$
\rho_1 \colon A[r] / A[r]^\perp \rightarrow \PGL_{{r^g},d} \rightarrow \PGL_d.
$$
Here, $\PGL_{r,d} \subseteq \PGL_{r^g-1}$ is the maximal parabolic subgroup that stabilizes 
$$\PP^{d-1} \simeq S \subseteq \PP^{r^g-1}\text{.}$$

Finally, it remains to show that $\P_1$ and $\P$ have the same class in $\Br(A)[r]$.  To this end, $\P_1 \cdot \P_1^*$ is the projectivization of a vector bundle.  On the other hand, $\P_1\cdot  \P_1^*$ is a subbundle of $\P \cdot \P_1^*$.  It follows from \cite[Lemma 3.12]{Bri}, that $\P \cdot \P_1^*$ is the projectivization of a vector bundle too.
\end{proof}

\section{Homogeneous irreducible Severi-Brauer varieties over Abelian varieties}

Our  main focus here is the study of \emph{homogeneous Severi-Brauer varieties} over an Abelian variety $A$.  Recall, that a $\PP^{n-1}$-bundle, $p \nmid n$, over $A$ is \emph{homogeneous} if it is isomorphic, as a bundle over $A$, to its pullback under all translations.  
A homogeneous $\PP^{n-1}$-bundle  is \emph{irreducible} if it contains no non-trivial proper homogeneous subbundle \cite[Proposition 3.5]{Bri}.

We describe the basic method of \cite{Bri} for constructing \emph{homogeneous irreducible} Severi-Brauer varieties over $A$.  It is implicit in the proof of Corollary \ref{homog:irred:bundles}.  In particular, each finite subgroup scheme $\K \subseteq A[n]$, with $\# \K = n^2$ and $p \nmid n$, together with a faithful irreducible projective representation 
\begin{equation}\label{bundle:theta:rep}
\rho \colon \K \rightarrow \PGL_n(\kk),
\end{equation}
determines a homogeneous $\PP^{n-1}$-bundle over $A$.  Such bundles may be described as
\begin{equation}\label{homog:irred:construct}
 \P :=  (A^t/\K^t)^t \times^\K \PP^{n-1}.
 \end{equation}
 In \eqref{homog:irred:construct}, $\K$ acts on $\PP^{n-1}$ via the representation \eqref{bundle:theta:rep} and on $(A^t/\K^t)^t$ via translation.  Here, we use the fact that 
  \begin{equation}\label{dual:eqn1}
 \K^{t t} = \K
 \end{equation} 
 is the kernel of the isogeny
 \begin{equation}\label{dual:eqn2}
 A = A^{tt} \leftarrow \left( A^t / \K^t \right)^t
 \end{equation}
 that is dual to 
 \begin{equation}\label{dual:eqn3}
 A^t \rightarrow A^t / \K^t \text{,}
 \end{equation}
 \cite[Theorem 11.1]{Milne:1986}.
 
As is a consequence of Proposition \ref{level:subgroup:weight1:rep}, such homogeneous $\PP^{n-1}$-bundles are \emph{irreducible}.  For later use, we record the fact that the theory of the Schr\"{o}dinger representations can be used to construct homogeneous irreducible $\PP^{n-1}$-bundles over $A$.

\begin{proposition}\label{Schrodinger:Azumaya}  Let $A$ be an Abelian variety over an algebraically closed field $\kk$.  
Let $\K \subseteq A[n]$ be a finite subgroup of rank $n^2$, for $n$ not divisible by the characteristic of $\kk$, and consider a nondegenerate theta group over $\K$
$$ 1 \rightarrow \G_m \rightarrow \G \rightarrow \K \rightarrow 1\text{,}$$
with Schr\"{o}dinger representation $(V, \rho)$ determined by a Lagrangian subgroup scheme $\H \subseteq \G$.  Then $(V, \rho)$ determines a homogeneous irreducible $\PP^{n-1}$-bundle over $A$.  Conversely, each homogeneous irreducible $\PP^{n-1}$-bundle over $A$ arises in this way.
\end{proposition}

\begin{proof}[Proof]
Consider the faithful projective projective representation
$$
\rho \colon \K \rightarrow \operatorname{PGL}_n
$$
that is determined by $(V,\rho)$.  Then, as in the above discussion, we obtain a $\PP^{n-1}$-bundle over $A$
$$ \P_{(V,\rho)} := (A^t/\K^t)^t \times^{\K} \PP^{n-1}.$$
Here the quotient is taken with respect to the diagonal action of $\K$.  In particular, $\K$ acts by
$$k \cdot(x,v) := (k + x, \rho(k)\cdot v).$$  
The fact that $\P_{(V, \rho)}$ is homogeneous is evident.  That $\P_{(V, \rho)}$ is irreducible follows from Proposition \ref{level:subgroup:weight1:rep}.  Finally, the converse follows via Corollary \ref{homog:irred:bundles} and Theorem \ref{Brauer:AH}.  \end{proof}

\section{Biextensions, cubic structures and theta groups}

In this section, we briefly describe some concepts which pertain to biextensions, cubic structures and theta groups.  More details can be found in \cite{Mumford:Formal:Groups}, \cite{Breen:1983}, and \cite{Moret-Bailly}.  In Section \ref{S-B-cubical-structures}, we use these concepts to study Severi-Brauer varieties and Azumaya algebras over Abelian varieties.  For the basic structure theory of commutative groups schemes, over a fixed algebraically closed base field $\kk$, we refer to \cite{Oort:1966}.  

Let $\K$ be a  commutative group scheme over $\kk$ and
$
\pi \colon L \rightarrow \K
$
 a $\G_m$-torsor.  Over $\K \times \K$, we let 
$$
 \pi \hat{*} \pi \colon L \hat{*} L \rightarrow \K \times \K
$$
denote the $\G_m$-torsor defined by the condition that its fiber over 
$$(x,y) \in \K \times \K$$ 
is described by
$$ L \hat{*} L |_{x,y} = L^{-1} |_x \otimes L^{-1}|_y \otimes L|_{x+y}.$$
This torsor carries a canonical symmetry
$$ 
\xi_{x,y} \colon L \hat{*} L|_{x,y} \xrightarrow{\sim} L \hat{*} L|_{y,x}.
$$
Over $\K \times \K \times \K$, let $\Theta(L)$ denote the $\G_m$-torsor which has fiber over 
$$(x,y,z) \in \K \times \K \times \K$$
given by
$$ 
\Theta(L)|_{x,y,z} = L|_x \otimes L|_y \otimes L|_z \otimes L^{-1}|_{x+y} \otimes L^{-1}|_{x+z} \otimes L^{-1}|_{y+z} \otimes L|_{x+y+z}.
$$

There is a concept of a \emph{symmetric biextension structure} for $L \hat{*} L$ as a $\G_m$-torsor over $\K \times \K$, \cite[Section 1]{Breen:1983}, \cite[Chapter I.2.5]{Moret-Bailly}, see also \cite[Section 2]{Mumford:Formal:Groups}.  We require two composition laws 
$$ 
+_1 \colon L \hat{*} L \times  L \hat{*} L \rightarrow L \hat{*} L
$$
and
$$ 
+_2 \colon L \hat{*} L \ \times  \ L \hat{*} L \rightarrow L \hat{*} L.
$$
These laws are required to have the properties that
\begin{enumerate}
\item[(i)]{
for each $x \in \K$, via $+_1$, the pullback
$ 
L \hat{*} L |_{x \times \K} \text{,} 
$ of $L \hat{*} L$ over $x \times \K$ is a commutative group scheme and the induced morphism 
$$
L \hat{*} L |_{x \times \K} \rightarrow \K
$$ 
is a surjective homomorphism with kernel equal to $\G_m$;
}
\item[(ii)]{
for each $y \in \K$, via $+_2$, the pullback
$
  L \hat{*} L |_{\K \times y} \text{,}
 $ 
 of $L \hat{*} L$ over $\K \times y$ is a commutative group scheme and the induced morphism 
 $$
 L \hat{*} L|_{\K \times y} \rightarrow \K
 $$ 
 is a
 surjective homomorphism with kernel equal to $\G_m$; and
}
\item[(iii)]{
if $x,y,u,v \in L \hat{*} L$ have respective image in $\K \times \K$ given by 
\begin{itemize}
\item{
$ \pi \hat{*} \pi(x) = (b_1,c_1)$;
}
\item{ 
$\pi \hat{*} \pi(y) = (b_1, c_2)$;
} 
\item{
$\pi \hat{*} \pi(u) = (b_2,c_1)$; and
} 
\item{
$\pi \hat{*} \pi(v) = (b_2, c_2)$,
}
\end{itemize}
then:
$$ (x +_1 y ) +_2 (u +_1 v) = (x +_2 u) +_1 (y +_2 v).$$
}
\end{enumerate}

There is a concept of \emph{cubic structure} for $L$ as a $\G_m$-torsor over $\K$, \cite[Section 2]{Breen:1983}, \cite[Chapter I.2.4]{Moret-Bailly}.  This is a section $t$ of $\Theta(L)$, as a $\G_m$-torsor over $\K \times \K \times \K$, which 
 is invariant under the natural action of the symmetric group on three letters and which is also a two cocycle in each pair of variables.  We say that the pair $(L,t)$ is a \emph{cubic torsor}.  There is an evident concept of morphisms between cubic torsors.  Let $\mathcal{C}ub(\K,\G_m)$ denote the category of cubic torsors over $\K$.

The following remark clarifies the relation amongst cubic and symmetric biextension structures.

\begin{proposition}[{\cite[Proposition 2.5.4]{Moret-Bailly}}]\label{cubic:symmetric:biextension:equivalence}  Let $\K$ be a  commutative group scheme over an algebraically closed based field $\kk$.  Let
$
\pi \colon L \rightarrow \K
$
 be a $\G_m$-torsor.
To give a symmetric biextension structure on the $\G_m$-torsor  $L \hat{*} L$, is equivalent to giving a cubic structure on $L$ as a $\G_m$-torsor over $\K$.
\end{proposition}

\begin{proof}
This is a special case of {\cite[Proposition 2.5.4]{Moret-Bailly}}, see also \cite[Section 2]{Breen:1983}.   Nevertheless, we include a proof here.  Specifically,  we explain how cubic structures on a $\G_m$-torsor $L$ are related to symmetric biextension structures on $L \hat{*} L$.  Important to that discussion is the following \emph{canonical commutative diagram}
$$
\xymatrix{
\Theta_3(L) = \Theta(L) \ar[d]^-{x_1}_-{\wr}  \ar[dr]_-{x_2}^{\sim} \\ 
(m \times 1)^* L \hat{*} L \otimes p_{13}^* (L \hat{*} L^{-1}) \otimes p_{23}^*( L \hat{*} L^{-1}) \ar[r]^-{\sim} & (1 \times m)^* L \hat{*} L \otimes p_{12}^* (L \hat{*} L)^{-1} \otimes p_{23}^* (L \hat{*} L)^{-1} \text{,}
}
$$
\cite[Equation 2.1.10]{Breen:1983}.
In the above diagram, $m$ denotes multiplication in the group law, $p_{ij}$ denotes projection onto the $i,j$-factor and  $x_i$, for $i = 1,2$, denotes the evident natural isomorphism.  These morphisms $x_i$ are induced, respectively, by the natural trivialization morphisms 
$\Osh_\K \xrightarrow{\sim} L^{-1}|_{x_1} \otimes L|_{x_1}$ 
and 
$\Osh_\K \xrightarrow{\sim} L^{-1}|_{x_2} \otimes L|_{x_2}\text{.}$

In particular, to begin with, suppose given a cubic structure $t$ on $\Theta(L)$.  We then deduce, using the above canonical commutative diagram, that $t$ determines a biextension structure on $L \hat{*} L$.  This biextension structure is compatible with the canonical symmetry
$$ \xi_{x,y} \colon L \hat{*} L|_{x,y} \xrightarrow{\sim} L \hat{*} L|_{y,x}.$$
Conversely, suppose given a symmetric biextension structure on the $\G_m$-torsor $L \hat{*} L$.  Especially, we are given sections $s_1$ and $s_2$ of, the respective, $\G_m$-torsors over $\K^3$
$$ (m \times 1)^* L \hat{*} L \otimes p_{13}^* L \hat{*} L^{-1} \otimes p_{23}^* L \hat{*} L^{-1}
$$
and
$$ 
(1 \times m)^* L \hat{*} L \otimes p_{12}^*L \hat{*} L^{-1} \otimes p_{13}^* L \hat{*} L^{-1};
$$
these sections are compatible with the canonical commutative diagram
$$ s_1 = s_2.$$
Further, since these sections correspond to the partial group law structure on $L \hat{*} L$, considered as a biextension, they are commutative and associative.  Put
$$ t = x_1^{-1}(s_1) = x_2^{-1}(s_2).$$
The point now is to check that this section of $\Theta(L)$ determines a cubic structure.

For example, over $\K^4$, we consider the $\G_m$-torsor
$$ p_{234}^* \Theta(L) \otimes m_{12}^* \Theta(L)^{-1} \otimes m_{23}^* \Theta(L) \otimes p_{124} \Theta(L)^{-1},$$
which has fiber
$$ 
\Theta(L)|_{y,z,w} \otimes \Theta(L)^{-1}|_{x+y, z, w} \otimes \Theta(L)|_{x,y+z,w} \otimes \Theta(L)^{-1}|_{x,y,w} 
$$
over 
$$(x,y,z,w) \in \K^4.$$  
(Here, we also use subscripts to denote the evident maps determined by the various factors of $\K^4$.)  This above torsor has a canonical section which also induces a canonical isomorphism
$$ \Theta(L)|_{x+y,z,w} \otimes \Theta(L)|_{x,y,w} \xrightarrow{\sim} \Theta(L)|_{y,z,w} \otimes \Theta(L)|_{x,y+z,w}.$$
As in \cite[Section 2.5]{Breen:1983}, the condition that $s_1$ is associative, for example, is equivalent to the assertion that the section
$$ t(y,z,w) \cdot t(x+y,z,w)^{-1} \cdot t(x,y+z,w) \cdot t(x,y,w)^{-1}$$
(over $\K^4$) coincides with the canonical section, that is
$$ t(x+y,z,w) \cdot t(x,y,w) = t(y,z,w) \cdot t(x, y+z,w).$$
Similarly, the associativity of $s_2$ is expressed in terms of the canonical isomorphism
$$ \Theta(L)|_{x,y+z,w} \otimes \Theta(L)|_{x,y,z} \xrightarrow{\sim} \Theta(L)|_{x,z,w} \otimes \Theta(L)|_{x,y,z+w}$$
which pertains to
$$ t(x,y+z,w) \cdot t(x,y,z) = t(x,z,w) \cdot t(x,y,z+w).$$
Finally, as in \cite[page 18]{Breen:1983}, we have an additional \emph{canonical commutative diagram}
$$
\xymatrix{
 \Theta(L)|_{y,z,w} \otimes \Theta(L)|_{x,y+z,w} \otimes \Theta(L)|_{x,y,z} \ar[dr]^-{\sim} \\
\Theta(L)|_{x+y,z,w} \otimes \Theta(L)|_{x,y,w} \otimes \Theta(L)|_{x,y,z} \ar[r]^-{\sim} \ar[u]^-{\wr} &\Theta(L)|_{y,z,w} \otimes \Theta(L)|_{x,z,w} \otimes \Theta(L)|_{x,y,z+w}.
 }$$
This diagram implies that
 $$ t(x+y,z,w) \cdot t(x,y,w) \cdot t(x,y,z) = t(y,z,w) \cdot t(x,z,w) \cdot t(x,y,z+w).$$
 (Here  we also use compatibility of the partial group laws $s_1$ and $s_2$.)

It remains to check invariance under the natural action of $\mathfrak{S}_3$, the symmetric group on three letters.  To this end, for each $\sigma \in \mathfrak{S}_3$, we have the canonical isomorphism
$$ x_\sigma \colon \sigma^* \Theta(L) \xrightarrow{\sim} \Theta(L)$$
which is induced by the group law on $\K$.  Put
$$t = t(x_1,x_2,x_3)$$ 
and let
$$t(x_{\sigma(1)}, x_{\sigma(2)}, x_{\sigma(3)}) = \sigma^* t$$ 
be the section of $\sigma^* \Theta(L)$ which is induced by $t$.  The fact that $s_1$ and $s_2$ are commutative, then imply the relations
$$ 
x_{(12)} (t (x_2, x_1, x_3)) = t(x_1, x_2, x_3)
$$
and
$$
x_{(23)} (t(x_1, x_3, x_2)) = t(x_1, x_2, x_3).
$$
More generally, it holds true that
$$ 
x_\sigma(t(x_{\sigma(1)}, x_{\sigma(2)}, x_{\sigma(3)})) = t(x_1, x_2, x_3),
$$
for each permutation $\sigma \in \mathfrak{S}_3$.  In particular, the pair $(L,t)$ is an object of $\mathcal{C}ub(\K, \G_m)$.
\end{proof}

Next, we consider central extensions.  
By a \emph{theta group} over $\K$, we mean a central extension
\begin{equation}\label{theta:eqn1}
1 \rightarrow \G_m \rightarrow \G \rightarrow \K \rightarrow 0
\end{equation}
of group schemes.  Each such extension determines a well-defined alternating form
\begin{equation}\label{theta:eqn2}
e \colon \K \times \K \rightarrow \G_m
\end{equation}
which is determined by
$$
e(a',b') = [a,b] = ab a^{-1} b^{-1}\text{,} 
$$
for $a,b \in \G$ lying over $a', b' \in \K$.

Here, $[\cdot,\cdot]$ is the commutator of the extension \eqref{theta:eqn1}.  In case that the form \eqref{theta:eqn2} is a perfect pairing, then we say that the extension \eqref{theta:eqn1} is \emph{nondegenerate}.  In this case, the group scheme $\K$ is finite and the pairing \eqref{theta:eqn2} induces an isomorphism amongst $\K$ and its Cartier dual.
The collection of such central extensions \eqref{theta:eqn1}, for $\K$ a given commutative group scheme, form a category which we denote by $\mathcal{C}ent(\K,\G_m)$.  The categories $\mathcal{C}ent(\K,\G_m)$ and $\mathcal{C}ub(\K,\G_m)$ are related as follows.

\begin{proposition}[{\cite[page 24]{Moret-Bailly}}]\label{theta:prop}  Let $\K$ be a commutative group scheme.
The category $\mathcal{C}ent(\K,\G_m)$ is equivalent to the category of couples $((L,t),\sigma)$ where $(L,t)$ is an object of $\mathcal{C}ub(\K,\G_m)$ and where $\sigma$ is a trivialization of the corresponding symmetric biextension structure for $L \hat{*} L$, as a $\G_m$-torsor over $\K \times \K$.
\end{proposition}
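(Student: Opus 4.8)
The plan is to exhibit a quasi-inverse pair of functors between $\mathcal{C}ent(\K,\G_m)$ and the category of couples $((L,t),\sigma)$, using Proposition \ref{cubic:symmetric:biextension:equivalence} to pass freely between cubic structures on $L$ and symmetric biextension structures on $\Lambda(L)$. The underlying dictionary I would set up first is that a central extension is the same datum as a $\G_m$-torsor $L$ over $\K$ together with a multiplication on its total space, and that such a multiplication is precisely a trivializing section of $\Lambda(L)$ subject to associativity and compatibility constraints.

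First I would describe the functor from central extensions to couples. Given a theta group $\G$, let $L$ be the underlying $\G_m$-torsor of $\G \rightarrow \K$. The group multiplication lies over $m \colon \K \times \K \rightarrow \K$, and since $\G_m$ is central the fiberwise product descends to maps $L_x \otimes L_y \rightarrow L_{x+y}$; this is exactly the data of a section $\sigma$ trivializing $\Lambda(L)$, whose fiber over $(x,y)$ is $L_x^{-1}\otimes L_y^{-1}\otimes L_{x+y}$. Associativity of the group law forces $\sigma$ to be a homomorphism with respect to each of the two partial laws, so that $\sigma$ trivializes a symmetric biextension structure on $\Lambda(L)$; by Proposition \ref{cubic:symmetric:biextension:equivalence} this biextension structure is equivalent to a cubic structure $t$ on $L$, and commutativity of the extension yields the $\mathfrak{S}_3$-symmetry of $t$. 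Thus $\G \mapsto ((L,t),\sigma)$.

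Next I would build the inverse functor. Given $((L,t),\sigma)$, I would define a multiplication on the total space of $L$ by $a \cdot b := (a \otimes b)\cdot \sigma(x,y) \in L_{x+y}$ for $a \in L_x$, $b \in L_y$, using the contraction $L_x \otimes L_y \otimes (L_x^{-1}\otimes L_y^{-1}\otimes L_{x+y}) \xrightarrow{\sim} L_{x+y}$. Adjoining $\G_m = L_0$ over the identity $0 \in \K$ produces a central extension $1 \rightarrow \G_m \rightarrow \G \rightarrow \K \rightarrow 0$, and its commutator recovers the alternating form $e$. Having both functors, I would then verify they are mutually quasi-inverse and functorial: a morphism of central extensions is an isomorphism of torsors intertwining the multiplications, matching exactly an isomorphism of couples respecting $t$ and $\sigma$.

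The hard part will be the second functor: checking that biextension-compatibility of $\sigma$ is \emph{equivalent} to, not merely sufficient for, associativity of the reconstructed product. This is the bookkeeping step in which one traces the two bracketings $(a\cdot b)\cdot c$ and $a\cdot(b\cdot c)$ through the canonical isomorphism $\Theta(L) \xrightarrow{\sim} (m\times 1)^*\Lambda(L)\otimes p_{13}^*\Lambda(L)^{-1}\otimes p_{23}^*\Lambda(L)^{-1}$ and its companion from the proof of Proposition \ref{cubic:symmetric:biextension:equivalence}, and confirms that the cocycle conditions defining $t$ together with the partial-homomorphism property of $\sigma$ cut out precisely the associativity constraint. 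Everything else — the symmetry of $t$ from commutativity, the reconstruction of $e$ from the commutator, and functoriality — I expect to be comparatively formal.
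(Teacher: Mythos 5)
Your overall architecture --- a pair of explicit quasi-inverse functors, with the multiplication of the theta group and the trivializing section $\sigma$ of $\Lambda(L)$ serving as dictionaries for one another, and with associativity $\Leftrightarrow$ biextension-compatibility as the crux --- is the same as the paper's argument (the paper's formulas \eqref{theta:prop:eqn8}, \eqref{theta:prop:eqn9}, \eqref{theta:prop:eqn10} carry out exactly the bookkeeping you defer). But your first functor contains a genuine error: you derive the $\mathfrak{S}_3$-symmetry of $t$ from ``commutativity of the extension.'' Objects of $\mathcal{C}ent(\K,\G_m)$ are \emph{central} extensions, not commutative ones; the extensions this paper actually needs (nondegenerate theta groups) have commutator pairing $e \colon \K \times \K \rightarrow \G_m$ of \eqref{theta:eqn2} a \emph{perfect} pairing, so they are as far from commutative as possible. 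As justified, your functor $\G \mapsto ((L,t),\sigma)$ is defined only on the subcategory of commutative extensions, on which the proposition you are proving loses all of its content: if the correspondence required commutativity, there would be no nondegenerate couples in the sense of Definition \ref{non-degenerate-couples} at all (for $r > 1$ a trivial commutator pairing is never perfect), and Theorems \ref{irred:homog:theta:bundles:thm} and \ref{Br:thm:2} would be empty statements.

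The correct mechanism is that the symmetry of $t$ follows from associativity and \emph{centrality} alone, because the commutator scalars cancel in pairs. Writing $u \in L_x$, $v \in L_y$, $w \in L_z$ and defining $t$ by the coboundary formula \eqref{theta:prop:eqn8}, the transposed section $t(y,x,z)$ contains the pair of factors $((v * u) * w) \otimes (v * u)^{-1}$; since $v * u = e(y,x) \cdot (u * v)$ with $e(y,x) \in \G_m$ central, the two occurrences of $e(y,x)$ cancel, giving $t(y,x,z) = t(x,y,z)$, and the other transposition is handled similarly using the $2$-cocycle identity. Relatedly, you should separate two notions your write-up conflates: the biextension structure induced on $\Lambda(L)$ \emph{is} symmetric (this uses bimultiplicativity of $e$), so Proposition \ref{cubic:symmetric:biextension:equivalence} applies, but the trivialization $\sigma$ itself is \emph{not} compatible with the canonical symmetry $\xi_{x,y}$: one computes $\xi_{x,y}(\sigma(x,y)) = e(x,y) \cdot \sigma(y,x)$, so the failure of $\sigma$ to be symmetric is precisely the commutator pairing. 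This is not a defect to be repaired but the entire point of the correspondence --- it is how noncommutative central extensions, in particular the nondegenerate ones, are encoded on the cubic-torsor side. (A small further point: no ``adjoining'' of $\G_m$ is needed in your second functor; the fiber $L_0$ already sits inside the total space of $L$, and the cubic structure supplies the identity section, as the paper notes when checking that $\G_m \otimes L_y \rightarrow L_y$ is the natural action.)
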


\begin{proof}  This is a special case of  \cite[page 22-24]{Moret-Bailly}, compare also with \cite[Section 2]{Breen:1983}.  Here we indicate the main points of the argument.  With this in mind, suppose that $(L,t)$ is a cubic $\G_m$-torsor over $\K$.  Let $\sigma$ be a trivialization of $\Theta_2(L)$.  For each $x \in \K$ and each $y \in \K$, we have a section $\sigma(x,y)$ of 
$$ 
L|_{x+y} \otimes L^{-1}|_x \otimes L^{-1}|_y
$$
together with an isomorphism
\begin{equation}\label{theta:prop:eqn1}
L|_x \otimes L|_y \xrightarrow{\sim} L|_{x+y}.
\end{equation}
This isomorphism \eqref{theta:prop:eqn1} then defines a product law
\begin{equation}\label{theta:prop:eqn2}
* : L \times L \rightarrow L
\end{equation}
that is compatible with addition and the given $\G_m$-action on $L$.  The product law \eqref{theta:prop:eqn2} can be described explicitly in terms of the trivialization $\sigma(\cdot,\cdot)$.  

For instance, if $x,y \in \K$, $u \in L|_x$ and $v \in L|_y$, then
$$
u * v \in L|_{x+y}
$$
is defined by the condition that
$$
\sigma(x,y) = (u * v) \otimes u^{-1} \otimes v^{-1} \in \Theta_2(L)|_{x+y}.
$$
By assumption, the section $\sigma$ is compatible with the biextension structure $t$ of $\Theta_2(L)$.

From this viewpoint, that $\sigma$ is compatible with the first composition law means that
$$
\sigma(x,z) \otimes \sigma(y,z) \in \Theta_2(L)|_{x,z} \otimes \Theta_2(L)|_{y,z}
$$
is sent to
$$
\sigma(x+y,z) \in \Theta_2(L)|_{x+y,z}
$$
via the condition that
$$
(u * w) \otimes u^{-1} \otimes w^{-1} \otimes (v * w ) \otimes v^{-1} \otimes w^{-1} \otimes t(x,y,z) = ((u * v) * w) \otimes (u*v)^{-1} \otimes w^{-1}.
$$
Here, we have that $u \in L|_x$, $v \in L|_y$, $w \in L|_z$ with $x \in \K$, $y \in \K$ and $z \in \K$.

We then note
\begin{equation}\label{theta:prop:eqn8}
t(x,y,z) = ((u*v)*w)\otimes(u*v)^{-1} \otimes (u*w)^{-1} \otimes (v * w)^{-1}  \otimes u \otimes v \otimes w.
\end{equation}
Similarly, compatibility with the second group law means
\begin{equation}\label{theta:prop:eqn9}
t(x,y,z) = (u * (v * w)) \otimes (u * v)^{-1} \otimes (u * w)^{-1} \otimes (v * w)^{-1} \otimes u \otimes v \otimes w.
\end{equation}

Finally, by comparison
\begin{equation}\label{theta:prop:eqn10}
(u * v) * w = u * (v * w).
\end{equation}
In particular, \eqref{theta:prop:eqn10} implies that the product law $*$ on $L \times L$ so defined is associative.  Further, the section of $L$ at the origin induced by the cubic structure $t$ is trivial; the composition
$$
\G_m \otimes L|_y \xrightarrow{\sim} L|_0 \otimes L|_y \xrightarrow{*} L|_y
$$
is the natural action of $\G_m$ on $L|_y$.

In brief, we have shown that the cubic torsor $(L, t)$ over $\K$ is given, via the trivialization $\sigma$, the structure of a group.  More precisely, $L$ is an object of $\mathcal{C}ent(\K,\G_m)$.  Finally, the product law $*$ can be viewed as a left action of $L$ on itself.  The formulas \eqref{theta:prop:eqn8} and \eqref{theta:prop:eqn9} allow for the cubic structure $t$ to be reconstructed from this group law together with the fixed trivialization $\sigma$.  The equivalence desired by the proposition now follows. 
\end{proof}

Proposition \ref{theta:prop} allows for an alternative formulation of Theorem \ref{homog:characterize} and Corollary \ref{anti:affine:cor}.

\begin{theorem}\label{cubic:homog:bundles}
Let $A$ be an Abelian variety over an algebraically closed field $\kk$.  Fix an integer $n$ that is not divisible by the characteristic of $\kk$.  Then the homogeneous rank $n$ Severi-Brauer varieties over $A$ are classified by the choice of an anti-affine extension 
\begin{equation}\label{anti:affine:ext:1}
1 \rightarrow H \rightarrow G \rightarrow A \rightarrow 1
\end{equation}
together with a pair $((L,t),\sigma)$ where $(L,t)$ is an object of $\mathcal{C}ub(H,\G_m)$ and where $\sigma$ is a trivialization of the corresponding symmetric biextension structure for $L \hat{*} L$ as a $\G_m$-torsor over $H \times H$.
\end{theorem}

\begin{proof}
By Theorem \ref{homog:characterize}, the homogeneous $\PP^{n-1}$-bundles over $A$ are classified by the choice of such an anti-affine extension \eqref{anti:affine:ext:1} together with a faithful projective representation 
\begin{equation}\label{project:rep:1}
\rho \colon H \rightarrow \operatorname{PGL}_n.
\end{equation}
But such a faithful projective representation \eqref{project:rep:1} is equivalent to the choice of a central extension of the form \eqref{theta:eqn1} with $\K$ replaced by $H$.  Thus the conclusion desired by   Theorem \ref{cubic:homog:bundles} follows from that of Proposition \ref{theta:prop}.
\end{proof}

Motivated by Proposition \ref{theta:prop} and Theorem \ref{cubic:homog:bundles}, we formulate the concept of \emph{nondegenerate cubic torsor} (see Definition \ref{non-degenerate-couples} below).  The main point is to identify those structures on cubic torsors which correspond to nondegenerate theta groups.  We use this formalism in the statement of Theorem \ref{Br:thm:2}.

\begin{defn}\label{non-degenerate-couples}  Let $\K$ be a finite commutative group scheme with rank $\#\K = n^2$ not divisible by the characteristic of $\kk$.
We say that a couple $((L,t),\sigma)$, where $(L,t)$ is an object of $\mathcal{C}ub(\K,\G_m)$ and where $\sigma$ is a trivialization of the corresponding symmetric biextension $L \hat{*} L$, as a $\G_m$-torsor over $\K \times \K$, is \emph{nondegenerate} if the central extension to which it corresponds, via Proposition \ref{theta:prop}, is nondegenerate.   By abuse of terminology we also simply say that $((L,t),\sigma)$ is a \emph{nondegenerate cubic torsor}.
\end{defn}

\section{Characterizing the homogeneous irreducible Severi-Brauer varieties in terms of cubic structures}\label{S-B-cubical-structures}

Our main goal here is to prove 

\begin{theorem}\label{irred:homog:theta:bundles:thm}  Let $A$ be an Abelian variety over an algebraically closed field $\kk$.  Fix a positive integer $n$, with $p:= \operatorname{char}(\kk) \nmid n$.
Then the homogeneous irreducible $\PP^{n-1}$-bundles over $A$ are classified by pairs 
$$(\K, ((L,t),\sigma))\text{,}$$ 
where $\K \subseteq A[n]$ is a subgroup of rank $n^2$ and where $((L,t), \sigma)$ is a nondegenerate cubic torsor.  In particular, $(L,t)$ is an object of $\mathcal{C}ub(\K,\G_m)$, $\sigma$ is a trivialization of the biextension structure on $\Lambda(L)$ which is induced by $t$ and the central extension corresponding to $((L,t),\sigma)$ is nondegenerate.
\end{theorem}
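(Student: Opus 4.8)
The plan is to deduce the asserted classification by composing three correspondences: Brion's classification of homogeneous irreducible bundles (Proposition \ref{Brion:classified:homog:irred}), the equivalence of Moret-Bailly between central extensions and trivialized cubic torsors (Proposition \ref{theta:prop}), and a bijection, implemented by the commutator, between nondegenerate theta groups over $\K$ and nondegenerate alternating pairings on $\K$. Fix a subgroup $\K \subseteq A[r]$ of rank $r^2$. By Proposition \ref{Brion:classified:homog:irred}, the homogeneous irreducible $\PP^{r-1}$-bundles over $A$ are classified by pairs $(\K,e)$, with $e \colon \K \times \K \to \G_m$ a nondegenerate alternating pairing. It therefore suffices, for each fixed such $\K$, to produce a bijection between these pairings $e$ and isomorphism classes of nondegenerate cubic torsors $((L,t),\sigma)$.

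First I would invoke Proposition \ref{theta:prop}, which identifies the isomorphism classes of couples $((L,t),\sigma)$ with the objects of $\mathcal{C}ent(\K,\G_m)$, that is, with central extensions $1 \to \G_m \to \G \to \K \to 1$; by Definition \ref{non-degenerate-couples} the couple is \emph{nondegenerate} exactly when the associated extension is. This reduces the matching problem to one between nondegenerate central extensions of $\K$ by $\G_m$ and nondegenerate alternating pairings on $\K$.

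The central step is to show that the commutator assignment \eqref{theta:eqn2}, sending an extension $\G$ to its alternating form $e$, is a bijection on isomorphism classes. Surjectivity follows from the Heisenberg, or Schr\"{o}dinger, construction: given a nondegenerate $e$, Mumford's theory produces a nondegenerate theta group with commutator $e$. For injectivity I would argue that two extensions with the same commutator differ by a commutative (symmetric) extension, i.e.\ by a class in $\Ext^1(\K,\G_m)$; since $\gcd(r,\cchar \kk)=1$ and $\kk$ is algebraically closed, $\K$ is \'{e}tale and $\G_m$ is divisible, whence $\Ext^1(\K,\G_m)=0$. Thus the symmetric part splits and the commutator pins down $\G$ up to isomorphism, with nondegeneracy of $e$ corresponding to nondegeneracy of $\G$.

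It then remains to check that the two parametrizations of bundles agree: the bundle that Brion attaches to $(\K,e)$ must coincide with the homogeneous irreducible bundle built from the Schr\"{o}dinger representation $(V,\rho)$ of the theta group $\G$ whose commutator is $e$. This compatibility is the content of Proposition \ref{Schrodinger:Azumaya}, together with the independence of $(V,\rho)$ on the choice of Lagrangian subgroup supplied by Proposition \ref{level:subgroup:weight1:rep} and the uniqueness of the irreducible weight $1$ representation. Composing the three bijections then yields the stated classification. The main obstacle is precisely this last verification in tandem with the injectivity of the central step: one must confirm both that a nondegenerate theta group is determined by its commutator pairing, via the vanishing of $\Ext^1(\K,\G_m)$, and that the Schr\"{o}dinger bundle reproduces Brion's bundle; the remaining identifications are formal consequences of Propositions \ref{Brion:classified:homog:irred}, \ref{theta:prop} and \ref{Schrodinger:Azumaya}.
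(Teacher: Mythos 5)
Your proposal is correct, and its overall architecture coincides with the paper's: both compose Brion's classification (Proposition \ref{Brion:classified:homog:irred}, equivalently Theorem \ref{irred:homog:structure}), the correspondence between nondegenerate alternating pairings and nondegenerate theta groups, and Moret-Bailly's equivalence (Propositions \ref{cubic:symmetric:biextension:equivalence} and \ref{theta:prop}) read through Definition \ref{non-degenerate-couples}, with Proposition \ref{Schrodinger:Azumaya} supplying the converse/compatibility direction. Where you genuinely diverge is in the middle step, the one the paper flags as requiring extra input. The paper proves that a nondegenerate theta group is determined by its commutator pairing via the elementary divisor (invariant factor) theorem of \S\ref{nondegenerate:alternating:form:theory}: every nondegenerate extension is put in the normal form $\G(\delta) = \G_m \times \K(\delta) \times \K(\delta)^{\mathrm{D}}$, which simultaneously yields existence (surjectivity of the commutator map), uniqueness, a Lagrangian subgroup, and the uniqueness of the irreducible weight $1$ representation. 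You instead split this into surjectivity (the Heisenberg construction, which in practice still rests on a Lagrangian decomposition of $(\K,e)$, i.e.\ on the same elementary divisor theory you are implicitly citing through Mumford) and injectivity, which you obtain cohomologically: the Baer difference of two extensions with equal commutator is a commutative extension, hence a class in $\Ext^1(\K,\G_m)$, and this group vanishes because $\K$ is finite \'etale of order prime to $\cchar \kk$ and $\G_m$ is divisible over the algebraically closed field $\kk$. That argument is sound and is arguably more conceptual and basis-free than the paper's; what the paper's normal form buys in exchange is the explicit model $\G(\delta)$, from which the Schr\"{o}dinger representation, its irreducibility, and the identification of the resulting bundle with Brion's bundle all fall out at once --- ingredients you must then assemble separately, as you do, from Propositions \ref{level:subgroup:weight1:rep} and \ref{Schrodinger:Azumaya} and the uniqueness of the irreducible weight $1$ representation from the theory of Section \ref{weight:one:theta}.
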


In our proof of Theorem \ref{irred:homog:theta:bundles:thm}, we require the elementary divisor (or invariant factor) theorem for nondegenerate alternating forms 
$$
e \colon \K \times \K \rightarrow \G_m.
$$ 
Here, $\K \subseteq A[n]$ is a subgroup scheme of rank $n^2$ and $p \nmid n$. 
We recall the main points following \cite[page 294]{MumI} closely (see also \cite[Remark  3.2]{Bri}). 
\begin{itemize}
\item{
Each nondegenerate alternating form 
\begin{equation}\label{theta:non:deg:alt:form}
e \colon \K \times \K \rightarrow \G_m
\end{equation}
determines a sequence of positive integers
$$ \delta = (d_1,d_2,\dots,d_k)\text{,} 
$$ 
with
$$
\text{ $d_{i+1} \mid d_i$, and $d_i > 1$,}
$$
so that 
$$
\K = \bigoplus\limits_{i=1}^k \left(\ZZ / d_i \ZZ \right)^{\oplus 2} \text{.}
$$
}
\item{We can write
$$
\K = \K(\delta) \oplus \K(\delta)^{t}
$$
with
$$ 
\K(\delta) = \bigoplus\limits_{i=1}^k \ZZ / d_i \ZZ
$$
and
$$
\K(\delta)^{t} = \Hom(\K(\delta),\G_m),
$$
the Cartier dual of $\K(\delta)$.  The identification
$$ 
\K(\delta) = \K(\delta)^{t}
$$
is induced by the alternating form $e$.
}
\item{
Put 
$$\G(\delta) = \G_m \times \K(\delta) \times \K(\delta)^{t}$$ 
with group law
$$ (\alpha,x,\ell) \cdot (\alpha',x',\ell') = (\alpha \cdot \alpha'\cdot \ell'(x),x+x',\ell+\ell').
$$
Then $\G(\delta)$ is a nondegenerate central extension
\begin{equation}\label{normal:form:central:extension}
1 \rightarrow \G_m \rightarrow \G(\delta) \rightarrow \K \rightarrow 0;
\end{equation}
it is the nondegenerate central extension determined by the nondegenerate alternating form \eqref{theta:non:deg:alt:form}.  Conversely, the commutator of each nondegenerate central extension
\begin{equation}\label{non:deg:central:ext:theta:structure}
1 \rightarrow \G_m \rightarrow \G \rightarrow \K \rightarrow 0
\end{equation}
determines a nondegenerate alternating form, with shape  \eqref{theta:non:deg:alt:form}.
}
\item{
By considering the vector $\delta$, it follows that the nondegenerate central extension \eqref{non:deg:central:ext:theta:structure}
is equivalent to the nondegenerate central extension \eqref{normal:form:central:extension}.  Finally, our discussion implies, 
that each nondegenerate alternating form \eqref{theta:non:deg:alt:form}
determines an irreducible weight $1$ representation (namely that of $\G(\delta)$ for $\delta$ the elementary divisor vector of $e$).   Conversely, for a given such  vector $\delta$, there is a unique alternating form and irreducible weight $1$ representation of this shape.   
}
\end{itemize}

We now prove Theorem \ref{irred:homog:theta:bundles:thm}.
\begin{proof}[Proof of Theorem \ref{irred:homog:theta:bundles:thm}]
By Proposition \ref{Schrodinger:Azumaya}, the homogeneous irreducible $\PP^{n-1}$-bundles over $A$ have the form 
$$\P = (A^t/\K^t)^t \times^{\K} \PP^{n-1},$$ 
for $\K \subseteq A[n]$ a subgroup with $\# \K = n^2$ and where $\K$ acts on $\PP^{n-1}$ by an irreducible faithful weight one representation.  In particular, such a $\PP^{n-1}$-bundle $\P$ is determined by the weight $1$ representation of a  nondegenerate theta group
$$
1 \rightarrow \G_m \rightarrow \G \rightarrow \K \rightarrow 1.
$$
Thus, in light of Definition \ref{non-degenerate-couples}, Theorem \ref{irred:homog:theta:bundles:thm} then follows upon applying Proposition \ref{cubic:symmetric:biextension:equivalence} and Proposition \ref{theta:prop}.
\end{proof}

Finally, we use Theorem \ref{Brion:Br:thm} and Theorem \ref{irred:homog:theta:bundles:thm} to prove Theorem \ref{Br:thm:2}.

\begin{proof}[Proof of Theorem \ref{Br:thm:2}]
By Theorem \ref{Brion:Br:thm}, each class in $\Br(A)[r]$ is represented by a homogeneous irreducible Severi-Brauer variety.  In turn, by 
Theorem \ref{irred:homog:theta:bundles:thm}, this means that each class in $\Br(A)[r]$ is represented by a nondegenerate pair $((L,t), \sigma)$.   Equivalently, using the language of Definition  \ref{non-degenerate-couples}, each class in $\Br(A)[r]$ is represented by a nondegenerate cubic torsor.
\end{proof}

\providecommand{\bysame}{\leavevmode\hbox to3em{\hrulefill}\thinspace}
\providecommand{\MR}{\relax\ifhmode\unskip\space\fi MR }
% \MRhref is called by the amsart/book/proc definition of \MR.
\providecommand{\MRhref}[2]{%
  \href{http://www.ams.org/mathscinet-getitem?mr=#1}{#2}
}
\providecommand{\href}[2]{#2}

\end{document}